\newtheorem{proposition}{Proposition}
\newtheorem{theorem}[proposition]{Theorem}
\newtheorem{lemma}[proposition]{Lemma}
\theoremstyle{definition}
\newtheorem{definition}[proposition]{Definition}
\theoremstyle{definition}
\theoremstyle{definition}
\numberwithin{equation}{section}
\newcommand{\CC}{{\mathbb C}}
\newcommand{\RR}{{\mathbb R}}
\newcommand{\NN}{{\mathbb N}}
\renewcommand{\Im}{\mbox{Im}}
\renewcommand{\Re}{\mbox{Re}}
\renewcommand{\date}{\today}
\def \bar{\overline}
\def \hat{\widehat}
\begin{document}

\vskip 3mm

\title[Polynomial Interpolation and Approximation in $\CC^d$ ]{\bf Polynomial Interpolation and Approximation in $\CC^d$ }  
\author{T. Bloom*, L. P. Bos, J.-P. Calvi and N. Levenberg}
\thanks{*Supported in part by an NSERC of Canada grant}
\date

\address{University of Toronto, Toronto, Ontario M5S 2E4 Canada}  
\email{bloom@math.toronto.edu}

\address{University of Verona, Verona, Italy}  
\email{leonardpeter.bos@univr.it}

\address{Universit\'e de Toulouse (III), Toulouse, France}
\email{jean-paul@calvi.org.uk}

\address{Indiana University, Bloomington, IN 47405 USA}
\email{nlevenbe@indiana.edu}

\begin{abstract}
We update the state of the subject approximately 20 years after the publication of \cite{BBCL}. 
This report is mostly a survey, with a sprinkling of assorted new results throughout. \end{abstract}

\maketitle

\section{Introduction.} Let $z_0,...z_n$ be $n+1$
distinct points in the plane and let $f$ be a
function which is defined at these points. The polynomials
$l_j(z)=\prod_{k\not= j}(z-z_k)/\prod_{k \not= j}(z_j-z_k), \ j=0,...,n$, 
\ are polynomials of degree $n$ with $l_j(z_k)=\delta_{j,k}$ which we call the
{\it fundamental Lagrange interpolating polynomials}, or FLIP's,
associated to $z_0,...,z_n$.  The polynomial $p(z)=\sum_{j=0}^n
f(z_j)l_j(z)$ is then the unique polynomial of degree at most $n$
satisfying $p(z_j)=f(z_j), \ j=0,...,n;$ we call it the {\it Lagrange
interpolating polynomial,} or LIP,
associated to $f,z_0,...,z_n.$ If $\Gamma$ is a
rectifiable Jordan curve such that
the points $z_0,...,z_n$ are inside $\Gamma$, and 
$f$ is holomorphic inside and on $\Gamma$, we can estimate
the error in our approximation of $f$ by $p$ at points
inside $\Gamma$ using the Hermite Remainder Formula: for any $z$ inside $\Gamma$,
\begin{equation} \label{hrf}
  f(z)-p(z)
  = 
  {1 \over 2\pi i} \int_{\Gamma} {\omega(z) \over \omega(t)} 
  {f(t) \over (t-z)} \, dt, 
\end{equation}
where $\omega (z)=\prod _{k=0} ^n (z-z_k).$ 
This elementary yet fundamental formula is the key to proving many important results on polynomial approximation and interpolation. We first recall the following result of Walsh which gives a quantitative version of the classical Runge theorem.

\begin{theorem}\label{walsh} {\bf (Walsh)} Let $K$ be a 
compact subset of the plane such that
$\CC \backslash K$ is connected and
has a Green function $g_K$.  Let $R > 1$, and define 
$$D_R:=\{z\in \CC: g_K(z) < \log R\}.$$ 
For $f$ continuous on $K$, let 
\begin{equation} \label{dndef} d_n(f,K):=\inf\{||f-p_n||_K: p_n \ \hbox{polynomial of degree} \ \leq n\}\end{equation}
where $||f-p_n||_K=\sup_{z\in K}|f(z)-p_n(z)|$. Then
$$
  \limsup_{n\to \infty} d_n(f,K)^{1/n}\leq 1/R
$$ 
if and only if $f$ is the restriction to $K$ of a function
holomorphic in $D_R$.  \end{theorem}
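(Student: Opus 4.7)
The plan is to establish the two implications separately, invoking the Hermite Remainder Formula $(\ref{hrf})$ for sufficiency and the Bernstein--Walsh pointwise estimate for necessity.

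For the sufficiency direction (assuming $f$ extends holomorphically to $D_R$), I would take $p_n$ to be the LIP for $f$ at a Fekete $(n{+}1)$-tuple $\{z_0^{(n)},\ldots,z_n^{(n)}\}$ of $K$, with nodal polynomial $\omega_n(z) = \prod_k (z - z_k^{(n)})$. Classical potential theory provides
\[
\lim_{n\to\infty} |\omega_n(z)|^{1/n} = c(K)\, e^{g_K(z)}
\]
uniformly on compact subsets of $\CC \setminus K$, together with $\limsup_n \|\omega_n\|_K^{1/n} \le c(K)$, where $c(K)$ denotes the logarithmic capacity. Given $1 < R'' < R$, I apply $(\ref{hrf})$ with $\Gamma = \partial D_{R''}$, which encloses $K$ and inside which $f$ is holomorphic. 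For $z \in K$ and $t \in \Gamma$, the ratio $|\omega_n(z)/\omega_n(t)|^{1/n}$ is at most $(1+\epsilon)/R''$ for large $n$, so the integral estimate yields $\|f - p_n\|_K \le C_\epsilon\, ((1+\epsilon)/R'')^n$. Hence $\limsup d_n(f,K)^{1/n} \le 1/R''$, and letting $R'' \to R$ closes this direction.

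For the necessity direction, suppose $\limsup d_n(f,K)^{1/n} \le 1/R$. Fix $R' < R$ and take polynomials $p_n$ of degree $\le n$ with $\|f - p_n\|_K \le (1/R')^n$ for all large $n$. The Bernstein--Walsh inequality---a direct consequence of the maximum principle applied to the subharmonic function $\tfrac{1}{n}\log|q| - g_K$---states that $|q(z)| \le \|q\|_K e^{n g_K(z)}$ on $\CC$ for every polynomial $q$ of degree $\le n$. Applied to $q = p_{n+1} - p_n$ (of degree $\le n{+}1$, with $\|q\|_K \le 2(1/R')^n$), this gives
\[
|p_{n+1}(z) - p_n(z)| \le 2(1/R')^n e^{(n+1) g_K(z)}.
\]
On any compact subset of $D_{R''}$ with $R'' < R'$, the right-hand side is at most $2R''(R''/R')^n$, which is summable. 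Therefore $(p_n)$ converges locally uniformly on $D_{R'}$ to a holomorphic function $F$ that agrees with $f$ on $K$. Exhausting $D_R$ by the nested union of the sets $D_{R'}$ as $R' \nearrow R$ yields the desired holomorphic extension.

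The real obstacle is not the argument itself but the two potential-theoretic inputs: the asymptotic behavior of the Fekete nodal polynomials $\omega_n$ and the Bernstein--Walsh pointwise bound. Both require a careful development of logarithmic capacity, Green functions, and the Fekete--Leja transfinite diameter; granted these, the rest of the proof reduces to a simple interplay of $(\ref{hrf})$ with the maximum principle.
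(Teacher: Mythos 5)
Your proof is correct and follows the classical route: interpolation at Fekete points plus the Hermite remainder formula \eqref{hrf} for sufficiency, and the Bernstein--Walsh inequality with a telescoping series for necessity. The paper itself states Theorem \ref{walsh} without proof as a classical result of Walsh, but it explicitly identifies \eqref{hrf} as the key tool, and your argument is exactly the standard one it has in mind (modulo the harmless slip of writing $D_{R'}$ where you mean the sets $D_{R''}$, $R''<R'$, whose union over $R'<R$ exhausts $D_R$).
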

 
Here, $\CC \backslash K$ has a Green function $g_K$ means that $g_K$ is continuous and subharmonic in $\CC$, harmonic in $\CC \backslash K$ with $g_K(z)-\log |z|$ bounded as $|z|\to \infty$, and $g_K=0$ on $K$. This final condition says that $K$ is a {\it regular} compact set.

Consider the following situation. 
Let $\{z_{nj}\}, \ j=0,...,n; \ n=1,2,...$ be an array of points. For each $f$ defined in a neighborhood of this array, we can form the sequence of LIP's $\{L_nf\}$ associated to $f$. Let $\omega_n (z):=\prod _{j=0} ^n (z-z_{nj})$. An easy consequence of Theorem \ref{walsh} and (\ref{hrf}) is the following.

\begin{theorem} \label{walsh2} Let $K\subset \CC$ be compact and regular with $\CC\setminus K$ connected. Let $\{z_{nj}\}$ be an array of points in $K$. Then for any $f$ which is holomorphic in a neighborhood of $K$, we have $L_nf\rightrightarrows f$ on $K$ (uniform convergence) if and only if 
$$ \lim_{n\to \infty} |\omega_n(z)|^{\frac{1}{n+1}} =\delta(K)\cdot e^{g_K(z)}$$
uniformly on compact subsets of $\CC\setminus K$.
\end{theorem}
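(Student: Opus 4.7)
The proof centers on the Hermite Remainder Formula (\ref{hrf}), with the contour $\Gamma$ chosen to be a Green-function level curve $\partial D_{R'}$ for a suitable $R' > 1$.

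For the sufficiency (``$\Leftarrow$''), I would first derive the companion asymptotic on $K$, namely $\|\omega_n\|_K^{1/(n+1)} \to \delta(K)$. The upper bound follows by applying the maximum principle to the entire function $\omega_n$ on $\overline{D_{R'}} \supset K$, invoking the hypothesis uniformly on $\partial D_{R'}$ to get $\limsup \|\omega_n\|_K^{1/(n+1)} \leq \delta(K) R'$, and then letting $R' \downarrow 1$ (regularity of $K$ making $g_K$ continuous across $\partial K$); the matching lower bound is the Chebyshev--Fekete inequality for the monic polynomial $\omega_n$ of degree $n+1$. Given $f$ holomorphic on some $D_R$ with $R > 1$, I fix $1 < R' < R$, take $\Gamma = \partial D_{R'}$ in (\ref{hrf}), and use the crude majorization
\[
\|f - L_n f\|_K \leq \frac{L(\Gamma)\,\|f\|_\Gamma}{2\pi\, d(K, \Gamma)} \cdot \frac{\|\omega_n\|_K}{\min_{t \in \Gamma}|\omega_n(t)|}.
\]
The two asymptotics (on $K$ and on $\Gamma$) give $\limsup_n \|f - L_n f\|_K^{1/(n+1)} \leq 1/R' < 1$, hence uniform (in fact geometric) convergence on $K$.

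For the necessity (``$\Rightarrow$''), I would test the hypothesis on $f_a(z) = 1/(z-a)$ for $a \in \CC \setminus K$; a direct verification (the polynomial $[\omega_n(a)-\omega_n(z)]/[\omega_n(a)(z-a)]$ has degree $n$ and interpolates $f_a$ at the nodes) gives
\[
f_a(z) - L_n f_a(z) = \frac{\omega_n(z)}{\omega_n(a)(z - a)}.
\]
Uniform convergence on $K$ therefore forces $\|\omega_n\|_K / |\omega_n(a)| \to 0$ for every $a \in \CC \setminus K$. Combined with the Bernstein--Walsh bound $|\omega_n(a)| \leq \|\omega_n\|_K\, e^{(n+1)g_K(a)}$ and the Chebyshev--Fekete lower bound $\liminf \|\omega_n\|_K^{1/(n+1)} \geq \delta(K)$, this yields two-sided control on the subharmonic functions $u_n := \tfrac{1}{n+1}\log|\omega_n|$. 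To upgrade this to the sharp asymptotic, I would pass to the normalized node-counting measures $\mu_n = \tfrac{1}{n+1}\sum_j \delta_{z_{nj}}$, which are probability measures on $K$ and hence weak-$*$ relatively compact; noting $u_n = U^{\mu_n}$, I would argue that the derived inequalities force any weak-$*$ subsequential limit $\mu$ to verify the Frostman inequalities characterizing the equilibrium measure $\mu_K$ of $K$. Uniqueness of $\mu_K$ then forces $\mu_n \to \mu_K$ weak-$*$, and continuity of $w \mapsto \log|z - w|$ on $K$ for $z \notin K$ yields $u_n(z) \to U^{\mu_K}(z) = g_K(z) + \log \delta(K)$, locally uniformly on $\CC \setminus K$.

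The main obstacle is this converse: the Hermite formula alone produces only the soft statement $\|\omega_n\|_K / |\omega_n(a)| \to 0$, and extracting the exact potential-theoretic asymptotic requires the uniqueness of the equilibrium measure together with the subharmonicity of $u_n$. The forward direction, by contrast, is essentially a single calculation once the companion asymptotic $\|\omega_n\|_K^{1/(n+1)} \to \delta(K)$ is in hand.
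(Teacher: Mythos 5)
The paper itself gives no written proof of Theorem \ref{walsh2} (it is asserted to be an easy consequence of Theorem \ref{walsh} and the Hermite formula \eqref{hrf}), so your argument has to stand on its own. Your sufficiency direction does: deriving $\|\omega_n\|_K^{1/(n+1)}\to\delta(K)$ from the hypothesis via the maximum principle on level curves of $g_K$, and then estimating the Hermite integral over $\Gamma=\partial D_{R'}$, is the standard and correct route (modulo the routine remarks that $R'$ should be chosen so that the level set is a finite union of rectifiable curves, and that regularity together with connectedness of $\CC\setminus K$ guarantees $\overline{D_{R'}}$ shrinks into any prescribed neighborhood of $K$ as $R'\downarrow 1$).

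The necessity direction, however, contains a genuine gap: the step in which you force every weak-$*$ subsequential limit of $\mu_n=\frac{1}{n+1}\sum_j\delta_{z_{nj}}$ to be $\mu_K$ is an attempt to prove something false. Indeed, the paper explicitly notes that property (4) of Definition \ref{props} does not imply property (3). Concretely, take $K=\{|z|\le 1\}$ and let the nodes be the $(n+1)$st roots of $r^{n+1}$ for a fixed $r\in(0,1)$, so $\omega_n(z)=z^{n+1}-r^{n+1}$. Then $|\omega_n(z)|^{1/(n+1)}\to |z|=\delta(K)e^{g_K(z)}$ uniformly on compact subsets of $\{|z|>1\}$ --- so by your own sufficiency argument $L_nf\rightrightarrows f$ for every $f$ holomorphic near $K$ --- yet $\mu_n$ converges to normalized arclength on $\{|z|=r\}$, not to $\mu_K$. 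The inequalities you derive ($\|\omega_n\|_K/|\omega_n(a)|\to 0$, Bernstein--Walsh, the Chebyshev lower bound) control the logarithmic potentials of the $\mu_n$ only \emph{outside} $K$, and distinct probability measures on $K$ can have identical potentials there (they share a balayage onto the outer boundary); no Frostman-type unicity argument can identify the limit measure, and none is needed since the conclusion of the theorem concerns only the potentials off $K$.

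The repair stays entirely at the level of harmonic functions on $\Omega=\overline{\CC}\setminus K$. Set $h_n:=g_K-\frac{1}{n+1}\log\bigl(|\omega_n|/\|\omega_n\|_K\bigr)$. Bernstein--Walsh gives $h_n\ge 0$ on $\Omega$; each $h_n$ is harmonic on $\Omega$, including at $\infty$, with $h_n(\infty)=\frac{1}{n+1}\log\|\omega_n\|_K-\log\delta(K)$; and your computation with $f_a(z)=1/(z-a)$ gives $\limsup_n h_n(a)\le g_K(a)$ at each $a\in\Omega$. By Harnack's principle any locally uniform subsequential limit $h$ is a nonnegative bounded harmonic function on $\Omega$ satisfying $h\le g_K$ near $\partial K$, hence (by regularity of $K$) with boundary values $0$ everywhere on $\partial K$; the maximum principle forces $h\equiv 0$. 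Therefore $h_n\to 0$ locally uniformly on $\Omega$, which simultaneously yields $\frac{1}{n+1}\log\|\omega_n\|_K\to\log\delta(K)$ and the desired locally uniform asymptotic for $|\omega_n|^{1/(n+1)}$, with no reference to the limits of the measures $\mu_n$ themselves.
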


Here $\delta(K)$ is the transfinite diameter of $K$. In \cite{BBCL}, several conditions on the array $\{z_{nj}\}$ were discussed which imply for any $f$ which is holomorphic in a neighborhood of $K$, we have $L_nf\rightrightarrows f$ on $K$. The results in this univariate setting are well understood. In $\CC^d, \ d>1$, knowledge of Lagrange interpolation is less complete. Let $\mathcal P_n$ denote the complex vector space of holomorphic polynomials of degree at most $n$ and let 
$$N =N(n) :=\hbox{dim} \mathcal P_n =   {n+d \choose n}.$$
Thus 
$$\mathcal P_n= \hbox{span} \{e_1,...,e_N\}$$ 
where $\{e_j(z):=z^{\alpha(j)}\}$ are the standard basis monomials. We let 
$$l_n:= \sum_{j=1}^N {\rm deg} e_j=\frac{dnN}{d+1}.$$
For 
points $\zeta_1,...,\zeta_N\in \CC^d$, define a (generalized) {\it Vandermonde determinant} of order $n$ as 
\begin{equation}\label{vdmordn}VDM(\zeta_1,...,\zeta_N)=\det [e_i(\zeta_j)]_{i,j=1,...,N} \end{equation}
$$= \det
\left[
\begin{array}{ccccc}
 e_1(\zeta_1) &e_1(\zeta_2) &\ldots  &e_1(\zeta_N)\\
  \vdots  & \vdots & \ddots  & \vdots \\
e_N(\zeta_1) &e_N(\zeta_2) &\ldots  &e_N(\zeta_N)
\end{array}
\right].$$
Given $N$ points
$A_n=\{A_{n1},...,A_{nN}\}$ with 
$$VDM(A_{n1},...,A_{nN})\not= 0,$$ we can form the FLIP's
$$
  l_{nj}(x):= {VDM(A_{n1},...,x,...,A_{nN}) \over VDM(A_{n1},...,A_{nN})}, 
  \qquad j=1,...,N.
$$
In the one (complex) variable case, we get {\it cancellation} in this ratio
so that the formulas for the FLIP's simplify.
In general, we still have $l_{nj}(A_{ni})= \delta_{ji}$ and $l_{nj}\in
{\mathcal P}_n$
since $l_{nj}$ is a linear combination of $e_1,..,e_{N}$.
For $f$ defined at the points in $A_n$, 
\begin{equation}\label{lipcn}
  (L_nf)(x):= \sum_{j=1}^{N} f(A_{nj})l_{nj}(x)
\end{equation}
is the Lagrange interpolating polynomial (LIP) for $f$ and the points in $A_n$.

In one variable, $VDM(A_{n1},...,A_{nN})\not= 0$ provided the points in $A_n$ are distinct. Given a compact set $K\subset \CC^d$, we say that $K$ is {\it determining} for $\bigcup {\mathcal P}_n$ 
if whenever $h\in \bigcup {\mathcal P}_n$ satisfies 
$h=0$ on $K$, it follows that $h\equiv 0$. For these sets we can find
points $\{A_{n1},...,A_{nN}\}$ for each $n$ with $VDM(A_{n1},...,A_{nN})\not= 0$; we call these points {\it unisolvent} of degree $n$. Despite the lack of a Hermite-type remainder formula, we can describe one condition on an array $\{A_{nj}\}_{j=1,..., N; \  n=1,2,...}$ lying in a compact set $K\subset \CC^d$ satisfying a multivariate version of ``regular with $\CC\setminus K$ connected'' which implies for any $f$ which is holomorphic in a neighborhood of $K$, we have $L_nf\rightrightarrows f$ on $K$. We call 
$$
  \Lambda_n:= \sup_{z\in K} \Lambda_n(z):= \sup_{z\in K} \sum_{j=1}^{N} |l_{nj}(z)|
$$
the $n$-th {\it Lebesgue constant} for $K,A_n$ (the function $z\to \Lambda_n(z)$ is the $n-$th {\it Lebesgue function}). It is the norm of the linear operator $\mathcal L_n: C(K)\to \mathcal P_n\subset C(K)$ where $\mathcal L_n(f)= L_nf$ from (\ref{lipcn}) and we equip $C(K)$ with the uniform norm. The next result follows from a multivariate version of Theorem \ref{walsh} together with the Lebesgue inequality which says that for every continuous function $f$  on $K$ we have
\begin{equation} \label{lebineq} ||f - L_n f||_K \leq (1+\Lambda_n)d_n(f,K)\end{equation}
with $d_n(f,K)$ as in (\ref{dndef}) using polynomials in $\CC^d$. 
\begin{proposition} \label{startprop}
Let $K\subset \CC^d$ be polynomially convex and $L-$regular and let
$A_n\subset K$ satisfy $VDM(A_{n1},...,A_{nN})\not= 0$ for each $n=1,2,..$. If $\limsup_{n\to \infty} \Lambda_n^{1/n}=1$, for each $f\in C(K)$, $$\limsup_{n\to \infty} ||f-L_nf||_K^{1/n}=
\limsup_{n\to \infty} d_n(f,K)^{1/n}.$$ \end{proposition}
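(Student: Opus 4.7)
The plan is to sandwich $\|f - L_n f\|_K$ between $d_n(f,K)$ from below (trivially, since $L_n f$ is itself a candidate polynomial) and $(1+\Lambda_n)\,d_n(f,K)$ from above (via the Lebesgue inequality (\ref{lebineq})), then take $n$-th roots. The assumption $\limsup_n \Lambda_n^{1/n} = 1$ ensures that the factor $1+\Lambda_n$ is subexponential and therefore disappears at the logarithmic-rate scale.

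For the lower bound, since $L_n f \in \mathcal{P}_n$, the definition (\ref{dndef}) of $d_n(f,K)$ as an infimum over degree-$n$ polynomials gives $d_n(f,K) \leq \|f - L_n f\|_K$ for every $n$. Taking $n$-th roots and passing to $\limsup$ yields
$$
  \limsup_{n\to\infty} d_n(f,K)^{1/n} \;\leq\; \limsup_{n\to\infty} \|f - L_n f\|_K^{1/n}.
$$

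For the upper bound, the Lebesgue inequality (\ref{lebineq}) asserts $\|f - L_n f\|_K \leq (1+\Lambda_n)\,d_n(f,K)$. Evaluating the $n$-th Lebesgue function at any interpolation node $A_{ni}$ shows $\Lambda_n \geq \Lambda_n(A_{ni}) = \sum_{j=1}^N |l_{nj}(A_{ni})| = 1$, so $(1+\Lambda_n)^{1/n} \leq (2\Lambda_n)^{1/n}$, and the hypothesis $\limsup_n \Lambda_n^{1/n} = 1$ forces $\limsup_n (1+\Lambda_n)^{1/n} = 1$. Taking $n$-th roots in the Lebesgue inequality and passing to $\limsup$ therefore gives
$$
  \limsup_{n\to\infty} \|f - L_n f\|_K^{1/n} \;\leq\; \limsup_{n\to\infty} d_n(f,K)^{1/n}.
$$
Combining the two bounds yields the claimed equality.

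The hypotheses that $K$ is polynomially convex and $L$-regular do not actually enter the derivation itself; they serve to make the statement non-vacuous, ensuring (via the multivariate Bernstein--Walsh/Siciak--Zaharjuta theory gestured at in the text) that unisolvent arrays with subexponential Lebesgue constants exist on $K$ and that $d_n(f,K)^{1/n}$ has a meaningful limit superior for $f$ holomorphic in a neighborhood of $K$. The proof itself reduces to a single application of the Lebesgue inequality together with the trivial lower bound, so there is essentially no obstacle; the only subtlety worth naming is the elementary check that $\Lambda_n \geq 1$, so that $\limsup_n \Lambda_n^{1/n} = 1$ indeed implies $\lim_n (1+\Lambda_n)^{1/n} = 1$.
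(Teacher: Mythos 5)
Your proof is correct and is essentially the argument the paper intends: the paper does not write out a detailed proof but states that the proposition follows from the Lebesgue inequality (\ref{lebineq}) together with the trivial lower bound $d_n(f,K)\leq \|f-L_nf\|_K$, which is exactly your sandwich. Your side remark that polynomial convexity and $L$-regularity are not used in the equality itself, but only in the subsequent application via Theorem \ref{sicwalsh} to deduce that (1) implies (4), is also accurate.
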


\noindent The notions of polynomial convexity and $L-$regularity will be defined in the next section. 

This property $\limsup_{n\to \infty} \Lambda_n^{1/n}=1$ is one of several we consider in the definition below.

\begin{definition} \label{props} Let $K$ be compact. Consider the following four properties which an array $\{A_{nj}\}_{j=1,..., N; \  n=1,2,...}\subset K$ may or may not possess:
\begin{enumerate}
\item  $\lim_{n\to \infty} \Lambda_n^{1/n} =1$;
\item  $\lim_{n\to \infty}|VDM(A_{n1},...,A_{nN})|^{1/l_n}=\delta(K)$;
\item  $\lim_{n\to \infty} \frac{1}{N}\sum_{j=1}^{N} \delta_{A_{nj}}= \mu_K$ weak-*;
\item $L_nf\rightrightarrows f$ on $K$ for each $f$ holomorphic on a neighborhood of $K$.
\end{enumerate}
\end{definition}

Here $\delta_A$ denotes the unit point mass at $A$. The probability measure $\mu_K$ is the (pluri-)potential theoretic equilibrium measure of $K$; i.e., for $K \subset \CC$ nonpolar, $\mu_K=\frac{1}{2\pi}\Delta V_K^*$ and  for $K \subset \CC^d$ nonpluripolar with $d>1$, $\mu_K=\frac{1}{(2\pi)^d}(dd^c V_K^*)^d$, the complex Monge-Amp\`ere measure of $V_K^*(z):=\limsup_{\zeta \to z}V_K(\zeta)$ where 
$$V_K(z)= \sup \{{1\over {\rm deg} (p)}\log |p(z)|:p\in \bigcup {\mathcal P}_n, \  ||p||_{K}\leq 1 \}$$
$$=\sup\{u(z):u\in L(\CC^d) \ \hbox{and} \  u\leq 0 \ \hbox{on} \ K\}.$$
Here $L(\CC^d)$ is the set of all plurisubharmonic functions on $\CC^d$ of logarithmic growth; i.e., $u\in L(\CC^d)$ if $u$ is plurisubharmonic in $\CC^d$ and $u(z)=\log |z| +0(1)$ as $|z|\to \infty$. 
 In \cite{BBCL} it was shown that for $K\subset \CC$ regular; i.e., $V_K=V_K^*=g_K$, and $\CC\setminus K$ connected, we have the implications
 \begin{equation} \label{eq:basicimplications}(1) \implies (2)\implies (3) \implies (4), \end{equation}
 while none of the reverse implications are necessarily true (although for arrays lying on the boundary of $K$, (3) and (4) are equivalent; for a more precise discussion, see \cite{BC2}). Proposition \ref{startprop} being true in $\CC^d$ for any $d$ shows that the implication $(1)\implies (4)$ remains true in $\CC^d$; and, as was shown in \cite{BBCL}, $(1) \implies (2)$ as well. 
  
We continue in the next section with the necessary definitions and an elaboration on the relationship between conditions (1) and (2). Recent deep results of R. Berman and S. Boucksom (\cite{BBnew} and with Nystrom \cite{BBN}) yield that  $(2) \implies (3)$; we discuss consequences of this result on recovering the measure $\mu_K$ in section 3. In section 4 we describe methods of recovering the extremal function $V_K$. We discuss the important Bernstein-Markov property in section 5. A brief introduction to weighted pluripotential theory in $\CC^d$ is provided in section 6, and a connection with unweighted pluripotential theory in $\CC^{d+1}$ as in  \cite{bloomweight} is given. Section 7 provides explicit and semi-explicit constructions of arrays in certain compact sets  satisfying conditions related to (1)-(4). We give a reprise of the analysis of so-called Bos arrays on the real unit disk $B_2\subset \RR^2\subset \CC^2$ in section 8. In section 9, we discuss computational approaches to constructing arrays in a compact set $K$ satisfying some of the properties (1)-(4). A brief discussion of Kergin interpolation forms the content of section 10, and we conclude this work, as was done in \cite{BBCL}, with a list of ten open problems.

We would like to thank the organizers of the Conference on Several Complex Variables on the occasion of Professor J\'ozef Siciak's 80th birthday for their hospitality and we dedicate this work to Professor Siciak for his contributions and inspiration to the pluripotential theory community.

\section{Subexponential Lebesgue constants and asymptotic Fekete arrays.} 

We work in $\CC^d$ using the same notation as in section 1. For a compact subset $K\subset \CC^d$ let
$$V_n =V_n(K):=\max_{\zeta_1,...,\zeta_N\in K}|VDM(\zeta_1,...,\zeta_N)|.$$
Then 
\begin{equation}\label{zahlim} \delta(K)=\delta^1(K)=\lim_{n\to \infty}V_{n}^{\frac{d+1}{dnN}} \end{equation} 
is the {\it transfinite diameter} of $K$. Points $z_1,...,z_N\in K$ satisfying $V_n= |VDM(z_1,...,z_N)|$ are called {\it $n-$th order Fekete points for $K$}. The temporary superscript ``$1$'' refers to a weight $w\equiv 1$ (see section 6). Zaharjuta \cite{zah} showed that the limit exists. Clearly if a compact set $K$ is contained in an algebraic subvariety of $\CC^d$ then $\delta(K)=0$. It turns out that for $K\subset \CC^d$ compact, $\delta(K)=0$ if and only if $K$ is pluripolar \cite{LT}.

If the compact set $K \subset \CC^d$ is $L$-regular, meaning that $V_K=V_K^*$, 
and for $R > 1$ we define
\begin{equation}\label{ten} 
  D_R := \{ z : V_K (z) < \log R \};
\end{equation}
then we have the Bernstein-Walsh inequality 
$$  |p(z)| \leq ||p||_K R^{ {\rm deg} p },  \qquad   z \in D_R $$
for every polynomial $p$ in $\CC^d$. A compact set $K\subset \CC^d$ is 
{\it polynomially convex}
if $K$ coincides with its {\it polynomial hull}
$$
  \hat K:= \{z\in \CC^d:|p(z)|\leq ||p||_K,
 \ p \ {\rm polynomial}\}.
$$
Then Theorem \ref{walsh} of Walsh goes over
{\it exactly} to several complex variables:
 
\begin{theorem} [\cite{sic81}] \label{sicwalsh} Let $K$ be an $L$-regular, polynomially convex
compact set in $\CC^d$. 
Let $R > 1$, and let $D_R$ be defined by (\ref{ten}).
Let $f$ be continuous on $K$. Then
$$
  \limsup_{n\to \infty} d_n(f,K)^{1/n}\leq 1/R
$$ 
if and only if $f$ is
the restriction to $K$ of a function holomorphic in $D_R$.  \end{theorem}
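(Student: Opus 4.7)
The statement is an ``if and only if,'' and the two directions require different machinery.

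\medskip

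\emph{Sufficiency ($\Leftarrow$).} Assume $\limsup_n d_n(f,K)^{1/n}\le 1/R$. For each $R'<R$ choose $p_n\in\mathcal P_n$ with $\|f-p_n\|_K\le C(R')^{-n}$, and form the telescoping differences $q_n:=p_{n+1}-p_n\in\mathcal P_{n+1}$, so $\|q_n\|_K\le 2C(R')^{-n}$. The Bernstein--Walsh inequality $|q_n(z)|\le\|q_n\|_K e^{(n+1)V_K(z)}$ implies, on any sublevel set $\overline{D_{R''}}$ with $1<R''<R'$, the bound $|q_n(z)|\le 2CR''(R''/R')^n$. Hence $p_{n_0}+\sum_{n\ge n_0}q_n$ converges uniformly on each $\overline{D_{R''}}$ to a function $F$ holomorphic on $D_{R'}=\bigcup_{R''<R'}\overline{D_{R''}}$ (the last identification uses $V_K=V_K^*$, i.e.\ $L$-regularity) and agreeing with $f$ on $K$. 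Letting $R'\uparrow R$ yields the holomorphic extension to $D_R$.

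\medskip

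\emph{Necessity ($\Rightarrow$).} Fix $R'<R$; the task is to construct $p_n\in\mathcal P_n$ with $\|f-p_n\|_K=O((R')^{-n})$. In $\CC^1$ the Hermite formula~(\ref{hrf}) accomplishes this by a direct contour integral; in $\CC^d$ no such identity is available, and this is the real obstacle. My plan is to use H\"ormander's $L^2$ estimates for $\bar\partial$ with the plurisubharmonic weight $2nV_K^*$. Fix $1<R''<R'$ and a smooth cutoff $\chi$ equal to $1$ on a neighborhood of $\overline{D_{R''}}$ and supported in $D_{R'}$. The $(0,1)$-form $g:=\bar\partial(\chi f)=f\,\bar\partial\chi$ is supported where $V_K^*\ge\log R''$, so H\"ormander's theorem yields $u_n$ with $\bar\partial u_n=g$ and
\[
\int_{\CC^d}|u_n|^2 e^{-2nV_K^*}(1+|z|^2)^{-2}\,dV \;\le\; C\!\int|g|^2 e^{-2nV_K^*}\,dV \;\le\; C'(R'')^{-2n}.
\]
Set $F_n:=\chi f-u_n$. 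Then $\bar\partial F_n=0$, so $F_n$ is entire; and the weight $e^{-2nV_K^*}$ together with $V_K^*(z)=\log|z|+O(1)$ at infinity forces $|F_n(z)|\le C|z|^{n+O(1)}$, whence by Liouville $F_n$ is a polynomial of degree $\le n+O(1)$. On $K$ we have $\chi f=f$ and $V_K^*=0$, so $\|f-F_n\|_K=\|u_n\|_K$; interior Cauchy estimates on a polydisk around each $z\in K$ that stays inside $D_{R''}$ upgrade the $L^2$ bound on $u_n$ to the sup bound $\|u_n\|_K\le C''(R'')^{-n}$. Relabelling to absorb the $O(1)$ degree shift and then letting $R'',R'\uparrow R$ gives $\limsup d_n(f,K)^{1/n}\le 1/R$.

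\medskip

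The main obstacle is the necessity direction, where one must pass from an $L^2$ bound on the $\bar\partial$-solution $u_n$ to a $C^0$ bound on $K$ with \emph{identical} exponential rate $(R'')^{-n}$: the $(1+|z|^2)^{-2}$ factor in H\"ormander's estimate and the freedom in the plurisubharmonic weight are the two places where loss could occur, and both are tamed by working on a small polydisk neighborhood of $K$ inside $D_{R''}$ where $V_K^*$ is close to $0$.
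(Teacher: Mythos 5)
Your proposal is essentially correct, but it does not follow the route the paper relies on. The paper gives no proof of Theorem \ref{sicwalsh}: it cites Siciak \cite{sic81} and only remarks that the ``only if'' direction (approximation rate $\Rightarrow$ extension) follows from the Bernstein--Walsh inequality, which is exactly your telescoping-series argument; there your only loose end is that you choose $p_n$ depending on $R'$, so you should either take a single sequence of (near-)best approximants or invoke the identity principle on the nonpluripolar set $K$ to glue the limits as $R'\uparrow R$. For the hard direction, Siciak's original argument is constructive, built on his extremal function $\Phi_K=e^{V_K}$ and interpolation/Chebyshev-type estimates at unisolvent arrays, whereas you use H\"ormander's $L^2$ estimates for $\overline\partial$ with the weight $2nV_K^*$ --- a well-known later alternative (in the style of Demailly). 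Your version checks out: the form $f\,\overline\partial\chi$ lives where $V_K^*\ge\log R''$, the weight $e^{-2nV_K^*}(1+|z|^2)^{-2}\sim|z|^{-2n-4}$ forces $\deg F_n\le n+O(1)$ (in fact $\le n$ after the homogeneous expansion computation), and the $L^2\to C^0$ passage on a tube around $K$ costs a factor $e^{n\epsilon(r)}$ with $\epsilon(r)=\sup\{V_K^*(w):\mathrm{dist}(w,K)\le r\}$, which tends to $0$ precisely because $L$-regularity makes $V_K$ continuous and vanish on $K$ --- so the $\limsup$ is unaffected; you correctly identify this as the delicate point, though your displayed bound $\|u_n\|_K\le C''(R'')^{-n}$ should really be $C(r)e^{n\epsilon(r)}(R'')^{-n}$ with $r$ then sent to $0$. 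What each approach buys: Siciak's proof stays within pluripotential theory and produces the approximants as interpolants (tying directly into the themes of this survey), while the $\overline\partial$ proof is shorter once H\"ormander's machinery is granted and generalizes readily (e.g., to weighted settings and to sections of line bundles, as in \cite{BBnew}).
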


The ``only if'' direction is the same for any $d$ and uses the Bernstein-Walsh inequality. Theorem  \ref{sicwalsh} immediately yields Proposition \ref{startprop}, showing that for $L$-regular, polynomially convex
compact sets in $\CC^d$, condition (1) implies (4). Unless otherwise noted, when discussing conditions (1)--(4) of Definition \ref{props} we will always assume $K$ is $L$-regular and polynomially convex.

It is easy to see that (1) implies (2) but the converse is not true. On pp. 462-463 in \cite{BBCL}, it was observed that for an array $\{A_{nj}\}\subset K$ with 
$$|VDM(A_{n1},...,A_{nN})|=c_n V_n(K)$$ 
where 
$$0<c_n <1, \  \limsup_{n\to \infty} c_n^{1/n}<1, \ \hbox{and} \ \lim_{n\to \infty} c_n^{1/l_n}=1$$
(e.g., $c_n = v^n$ for $0<v<1$), property (2) holds but (1) does not. More precisely, we have the following.

\begin{proposition} \label{onetwo} Let $\{A_{nj}\}_{j=1,..., N; \  n=1,2,...}\subset K$ be an array of points. Suppose that 
$$\lim_{n\to \infty} \bigl(\frac{V_n(K)}{|VDM(A_{n1},...,A_{nN})|}\bigr)^{1/n}=1.$$
Then $(1)$ holds.
\end{proposition}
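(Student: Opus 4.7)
The plan is to exploit the explicit quotient-of-Vandermondes formula for the FLIPs and obtain a direct pointwise bound on each $|l_{nj}|$. Specifically, for any $x\in K$ and any $j$, the matrix defining the numerator of $l_{nj}(x)$ is obtained from the Vandermonde matrix at $A_n$ by replacing the column corresponding to $A_{nj}$ with the column $[e_i(x)]_{i=1}^N$. Since $x\in K$ and the other $N-1$ points are also in $K$, the numerator $|VDM(A_{n1},\dots,x,\dots,A_{nN})|$ is bounded above by $V_n(K)$. Therefore
$$|l_{nj}(x)|\leq \frac{V_n(K)}{|VDM(A_{n1},\dots,A_{nN})|}\qquad(x\in K,\ j=1,\dots,N).$$

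Summing over $j$ and taking the supremum over $x\in K$ gives
$$\Lambda_n \leq N\cdot\frac{V_n(K)}{|VDM(A_{n1},\dots,A_{nN})|}.$$
Now $N=\binom{n+d}{n}$ grows only polynomially in $n$, so $N^{1/n}\to 1$, and the hypothesis says the other factor also tends to $1$ in $n$-th root. Taking $n$-th roots of both sides yields $\limsup_{n\to\infty}\Lambda_n^{1/n}\leq 1$.

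For the matching lower bound, note that $l_{ni}(A_{ni})=1$, so evaluating the Lebesgue function at one of the interpolation nodes gives $\Lambda_n\geq \Lambda_n(A_{ni})\geq 1$. Hence $\Lambda_n^{1/n}\geq 1$ for every $n$, and combining with the previous step gives $\lim_{n\to\infty}\Lambda_n^{1/n}=1$, which is property $(1)$.

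There is no serious obstacle here; the argument is essentially a one-line estimate. The only conceptual point worth emphasizing is that the dimension factor $N$ is subexponential in $n$, so the hypothesis—phrased only in the $n$-th root scale—is precisely strong enough to absorb it. The same remark explains why one cannot hope to conclude $(1)$ from the weaker assumption that the ratio $V_n(K)/|VDM(A_{n1},\dots,A_{nN})|$ tends to $1$ only in the $l_n$-th root sense (which is how property $(2)$ is phrased), consistent with the example $c_n=v^n$ cited just before the proposition.
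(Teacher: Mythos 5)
Your argument is correct and is essentially the same as the paper's: both bound the numerator Vandermonde in the FLIP formula by $V_n(K)$ to get $\|l_{nj}\|_K\leq V_n(K)/|VDM(A_{n1},\dots,A_{nN})|$, hence $\Lambda_n\leq N\cdot V_n(K)/|VDM(A_{n1},\dots,A_{nN})|$, and then use that $N^{1/n}\to 1$. Your explicit inclusion of the trivial lower bound $\Lambda_n\geq 1$ is a small completeness improvement over the paper's writeup, which leaves it implicit.
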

\begin{proof} The result follows trivially from the observation that if 
$$\frac{V_n(K)}{|VDM(A_{n1},...,A_{nN})|}\leq a_n,$$
then $\Lambda_n\leq N\cdot a_n$. This observation is a consequence of the fact that each FLIP can be written as
$$
  l_{nj}(z):= {VDM(A_{n1},...,z,...,A_{nN}) \over VDM(A_{n1},...,A_{nN})}$$
  so that
  $$|l_{nj}(z)| \leq a_n{|VDM(A_{n1},...,z,...,A_{nN})| \over V_n(K)}.$$
 Since $|VDM(A_{n1},...,z,...,A_{nN})| \leq V_n(K)$ for each $z\in K$, we have $||l_{nj}||_K \leq a_n$.\end{proof}

\bigskip

\section{Arrays yielding $\mu_K$.} In one variable, $\frac{-1}{n^2}\log |VDM(z_1,...,z_n)|$ is a discrete ``approximation'' to the logarithmic energy of the measure $\mu_n=\frac{1}{n}\sum_{j=1}^n\delta_{z_j}$. This is the idea behind the classical proof that $(2)\implies (3)$. In several complex variables, the complex Monge-Amp\`ere operator is non-linear and, until recently, no reasonable notion of the energy of a measure existed. We state without proof the remarkable result of Berman, Boucksom and Nystrom \cite{BBN} that, nevertheless, $(2)\implies (3)$ for general nonpluripolar compact sets $K$ in the multivariate setting.

\begin{theorem}[\cite{BBN}] \label{asympfek}  Let $K\subset \CC^d$ be compact and nonpluripolar. For each $n$, take points $x_1^{(n)},x_2^{(n)},\cdots,x_N^{(n)}\in K$ for which 
\begin{equation}\label{afp}
 \lim_{n\to \infty}|VDM(x_1^{(n)},\cdots,x_N^{(n)})|^{{(d+1)\over  dnN}}=\delta(K)
\end{equation}
$($asymptotic Fekete points$)$ and let $\mu_n:= \frac{1}{N}\sum_{j=1}^N \delta_{x_j^{(n)}}$. Then
$$
\mu_n \to \mu_{K} \ \hbox{weak}-*.
$$
\end{theorem}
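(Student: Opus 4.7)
My plan is to mimic the one-variable proof, in which $-\frac{2}{n(n-1)}\log|VDM(z_1,\ldots,z_n)|$ is recognized as a Riemann-sum approximation to the logarithmic energy $I(\mu) = \iint \log \tfrac{1}{|z-w|}\,d\mu(z)\,d\mu(w)$, and $\mu_K$ is the unique probability measure on $K$ minimizing $I$ with $I(\mu_K) = -\log \delta(K)$. The univariate argument then extracts a weak-$*$ subsequential limit $\nu$ of $\mu_n$, applies lower semicontinuity of $I$ to obtain $I(\nu) \leq -\log \delta(K)$, and concludes $\nu = \mu_K$ from uniqueness of the minimizer. The fundamental obstacle in $\CC^d$ is that $|VDM|$ admits no pairwise-kernel representation and the complex Monge--Amp\`ere operator is nonlinear, so logarithmic energy must be replaced by a suitable pluripotential-theoretic energy.

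The crucial step is therefore to construct a functional $\mathcal E^*$ on probability measures supported in $K$ with the two properties: (a) $\mu_K$ is the unique minimizer of $\mathcal E^*$ on $\mathcal M(K)$, attaining the value $-\log \delta(K)$; and (b) $\mathcal E^*$ is weak-$*$ lower semicontinuous and admits a discrete avatar comparable to $-\frac{d+1}{dnN}\log|VDM|$. Following the Berman--Boucksom framework, I would define an Aubin--Yau-type Monge--Amp\`ere energy $\mathcal E$ on a suitable class of functions in $L(\CC^d)$, schematically
$$\mathcal E(\varphi) = \frac{1}{(d+1)(2\pi)^d}\sum_{j=0}^d \int \varphi\,(dd^c\varphi)^j\wedge (dd^c V_K^*)^{d-j},$$
and then set $\mathcal E^*(\mu) := \sup_\varphi \bigl(\mathcal E(\varphi) - \int\varphi\,d\mu\bigr)$ over $\varphi \in L(\CC^d)$ with $\varphi \leq 0$ on $K$. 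The variational characterization $(2\pi)^{-d}(dd^c V_K^*)^d = \mu_K$ then identifies $\mu_K$ as the unique minimizer with $\mathcal E^*(\mu_K) = -\log \delta(K)$.

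The bridge from discrete to continuous runs through the orthogonal-polynomial/Bergman-kernel picture. Fix a Bernstein--Markov reference measure $\nu$ on $K$; then
$$|VDM(x_1^{(n)},\ldots,x_N^{(n)})|^2 = \det[K_n(x_i^{(n)},x_j^{(n)})] \cdot \det G_n,$$
where $K_n$ is the reproducing kernel of $\mathcal P_n \subset L^2(\nu)$ and $G_n$ is a Gram determinant. Berman's asymptotics $\frac{1}{2n}\log K_n(z,z)\to V_K^*(z)$, combined with Hadamard-type determinant inequalities, let one estimate $\frac{d+1}{dnN}\log|VDM|$ from above by $\int V_K^*\,d\mu_n$ plus an entropy-like correction encoding $-\mathcal E^*(\mu_n) + o(1)$. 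The asymptotic Fekete hypothesis (\ref{afp}) then forces this entropy term to approach its minimum, so for any weak-$*$ cluster point $\nu$ of $\mu_n$, lower semicontinuity yields $\mathcal E^*(\nu) \leq -\log\delta(K)$, and uniqueness of the minimizer gives $\nu = \mu_K$.

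The hard part, and the content of \cite{BBN}, is the discrete-to-continuous comparison: one needs global off-diagonal Bergman-kernel asymptotics, well-posedness of the Monge--Amp\`ere energy on the unbounded plurisubharmonic class arising here, and what amounts to a $\Gamma$-convergence statement for the discrete energies. Once this analytical core is in place, the proof of the theorem is the abstract minimization argument above. The conclusion is purely variational; everything deep is hidden in establishing the energy formalism and the Bergman-kernel asymptotics that drive it.
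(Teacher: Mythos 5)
The paper does not prove this theorem: it is stated explicitly ``without proof'' as a result of Berman, Boucksom and Nystr\"om, so there is no internal argument to compare yours against; I measure your sketch against the proof in \cite{BBN}. Your cast of characters is right --- the Monge--Amp\`ere energy $\mathcal E$, its Legendre-type transform $\mathcal E^*$, the identification of $\mu_K$ as the unique minimizer with value $-\log\delta(K)$, and Bergman-kernel asymptotics over a Bernstein--Markov measure --- and you correctly locate the difficulty in the absence of a pairwise kernel. But the mechanism you propose for closing the argument (extract a weak-$*$ cluster point $\nu$ of $\mu_n$, bound $\mathcal E^*(\nu)$ by lower semicontinuity, invoke uniqueness of the minimizer) does not go through as stated. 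The obstruction is that $\mathcal E^*(\mu_n)=+\infty$ for a purely atomic measure, exactly as the logarithmic energy of an atomic measure is $+\infty$ in one variable; in $\CC$ one repairs this by deleting the diagonal terms from $\log|VDM|$, but in $\CC^d$ there is no pairwise decomposition and hence no diagonal to delete, so there is no finite ``discrete avatar'' of $\mathcal E^*(\mu_n)$ comparable to $-\frac{d+1}{dnN}\log|VDM(x_1^{(n)},\dots,x_N^{(n)})|$. Your ``entropy-like correction encoding $-\mathcal E^*(\mu_n)+o(1)$'' is precisely the object that cannot be constructed, and $\Gamma$-convergence of the discrete energies is not what \cite{BBN} proves.

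What actually closes the argument in \cite{BBN} is a differentiability-and-concavity device, and it is the reason the weighted theory is indispensable (as the paper itself remarks at the end of section 6). To test weak-$*$ convergence against a continuous $v$ one perturbs the weight, $w\mapsto we^{-tv}$, and considers $F_n(t):=\frac{d+1}{dnN}\log|W_t(x_1^{(n)},\dots,x_N^{(n)})|$ together with $\widehat F_n(t)$ built from the maximal weighted Vandermonde over $K$. These are concave in $t$, $F_n'(0)$ equals $-n\cdot\frac{d+1}{d+ \, \cdot}\int v\,d\mu_n$ up to the correct normalization, $F_n\le\widehat F_n$ with $F_n(0)-\widehat F_n(0)\to 0$ by the asymptotic Fekete hypothesis (\ref{afp}), and the Berman--Boucksom theorem identifies $\lim_n\widehat F_n(t)$ for every $t$ with the energy at equilibrium, a function of $t$ differentiable at $0$ with derivative proportional to $-\int v\,d\mu_K$. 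A standard lemma on concave functions (a sequence of concave functions converging to a differentiable limit, and asymptotically touching it from below at $t=0$, has derivatives at $0$ converging to the derivative of the limit) then gives $\int v\,d\mu_n\to\int v\,d\mu_K$. If you want your write-up to be an honest outline of the proof, you must replace the lower-semicontinuity-plus-unique-minimizer step by this differentiation-in-the-weight step; as written, the hypothesis (\ref{afp}) is never actually brought to bear against an arbitrary test function $v$.
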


This gives a positive answer to question 6 posed in \cite{BBCL}. In Proposition 3.7 of \cite{BBCL} it was shown that for a {\it Leja sequence} $\{x_1,x_2,...\}\subset K$, 
$$\lim_{n\to \infty} |VDM(x_1,...,x_N)|^{{(d+1)\over  dnN}}=\delta(K).$$
Thus the asymptotic Fekete property (\ref{afp}) holds for this {\it sequence} of points; so from Theorem \ref{asympfek} it follows that the discrete measures $$\mu_n :={1\over N}\sum_{i=1}^N\delta_{x_i}$$
converge weak-* to $\mu_K$. A Leja sequence is defined inductively as follows. Take the standard monomial basis $\{e_1,e_2,...\}$ for $\bigcup \mathcal P_n$ ordered so that deg$e_i\leq$deg$e_j$ if $i\leq j$. Given $m$ points $z_1,...,z_m$ in $\CC^d$, we write
$$VDM(z_1,...,z_m)=\det [e_i(z_j)]_{i,j=1,...,m}.$$
Starting with any point $x_1\in K$, having chosen $x_1,...,x_m\in K$ we choose $x_{m+1}\in K$ so that
$$|VDM(x_1,...,x_m,x_{m+1})|=\max_{x\in K} |VDM(x_1,...,x_m,x)|.$$

We remark that despite possessing the desirable property that $\mu_n \to \mu_K$ weak-*, it is unknown if (1) always holds for a Leja sequence, even in the univariate case ($d=1$). This is the first question in section 5 of \cite{BBCL}. We end this subsection with the statement of a result of R. Taylor and V. Totik that gives a partial answer in the $d=1$ setting.

\begin{theorem}[\cite{TT}]\label{th:taylortotik} Let $K\subset \CC$ be compact and assume that the outer boundary of $K$ can be written as a finite union of $C^2$ arcs. Then any Leja sequence for $K$ satisfies property $(1)$.
\end{theorem}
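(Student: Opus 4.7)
In one complex variable the Lebesgue function of a Leja sequence admits a clean form I plan to exploit. Set $\omega_n(z) := \prod_{k=1}^n (z - x_k)$ and let $M_n := \|\omega_n\|_K = |\omega_n(x_{n+1})|$ be the Leja extremum; by classical results on Leja sequences (cf.\ Proposition 3.7 of \cite{BBCL}) one has $M_n^{1/n} \to \delta(K) = \mathrm{cap}(K)$. The degree-$n$ FLIPs at the nodes $x_1,\ldots,x_{n+1}$ take the form
\[
l_j^{(n)}(z) = \frac{\omega_{n+1}(z)}{(z-x_j)\,\omega'_{n+1}(x_j)},
\]
so that for $z \in K$,
\[
|l_j^{(n)}(z)| \;\le\; \frac{M_{n+1}}{|z-x_j|\cdot |\omega'_{n+1}(x_j)|}.
\]

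This reduces $\Lambda_n^{1/n} \to 1$ to the uniform lower estimate $\min_j |\omega'_{n+1}(x_j)|^{1/n} \to \delta(K)$, together with a local Markov-type step handling the apparent singularity at $z = x_j$ in the bound above (removable because $l_j^{(n)}(x_j) = 1$). On \emph{average} the lower estimate is automatic: $\prod_j |\omega'_{n+1}(x_j)| = |VDM(x_1,\ldots,x_{n+1})|^2$, and the asymptotic Fekete property of Leja sequences forces the geometric mean of the cofactors to behave like $\delta(K)^n$. The real content is the uniformity in $j$, i.e.\ that no single Vandermonde cofactor decays exponentially faster than the geometric mean.

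The $C^2$ hypothesis enters precisely here. On a $C^2$ outer boundary $d\mu_K/ds$ is continuous and bounded above (with the classical square-root singularity at each endpoint), which yields a Markov inequality $\|p'\|_K \le C n^2 \|p\|_K$ and sharp local harmonic-measure comparisons. Combined with the weak-* equidistribution of Leja counting measures to $\mu_K$ (Theorem \ref{asympfek}), the Taylor--Totik strategy uses these together with the Leja extremality $M_{j-1} = |\omega_{j-1}(x_j)|$ and a local Bernstein--Walsh comparison near the boundary to forbid any single $|\omega'_{n+1}(x_j)|$ from dropping exponentially below the geometric-mean level $\delta(K)^n$. The main obstacle is exactly this uniform-in-$j$ promotion: the local comparison is delicate near the arc endpoints where the density degenerates, and without $C^2$ regularity both the Markov exponent and the boundedness of the density fail, allowing a single ``bad'' Leja node to produce a cofactor that is harmless in the geometric mean but fatal for the Lebesgue constant --- which is why property $(1)$ for Leja sequences on general planar compacta remains open (the first question in section 5 of \cite{BBCL}).
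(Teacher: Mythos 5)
The paper states this theorem without proof --- it is quoted directly from Taylor--Totik \cite{TT} as background --- so there is no internal argument to compare against, and your proposal has to stand on its own. It does not: it is an accurate outline of the known strategy that stops exactly at the point where the theorem's real content begins.

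Your reduction is correct and standard. Writing $l_j^{(n)}(z)=\omega_{n+1}(z)/\bigl((z-x_j)\,\omega_{n+1}'(x_j)\bigr)$, handling the removable singularity at $x_j$ by a Markov inequality (available with exponent $2$ on finite unions of $C^2$ arcs), and using $\Lambda_n\le (n+1)\max_j\|l_j^{(n)}\|_K$ does reduce property (1) to the uniform lower bound $\min_j|\omega_{n+1}'(x_j)|^{1/n}\to\delta(K)$; and the identity $\prod_j|\omega_{n+1}'(x_j)|=|VDM(x_1,\dots,x_{n+1})|^2$ does control the geometric mean. But the promotion from the geometric mean to the minimum \emph{is} the theorem, and you do not prove it --- you describe what must be shown and attribute the mechanism to ``the Taylor--Totik strategy,'' which is a citation, not an argument. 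Concretely, $|\omega_{n+1}'(x_j)|=M_{j-1}\cdot\prod_{j<k\le n+1}|x_j-x_k|$; the factor $M_{j-1}\ge\delta(K)^{j-1}$ comes for free from Leja extremality, but nothing in your write-up prevents later Leja points from accumulating at $x_j$ rapidly enough to make $\prod_{k>j}|x_j-x_k|$ exponentially small. Excluding that requires quantitative control --- a separation estimate for Leja nodes together with an Erd\H{o}s--Tur\'an-type discrepancy bound for the counting measures, which is where the $C^2$ hypothesis genuinely enters --- and the weak-$*$ convergence $\mu_n\to\mu_K$ that you invoke via Theorem \ref{asympfek} gives no pointwise or uniform control on the discrete potentials $\frac{1}{n}\sum_{k\ne j}\log|x_j-x_k|$, so it cannot substitute for that control. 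A smaller inaccuracy: for a $C^2$ \emph{arc} the equilibrium density $d\mu_K/ds$ is not ``continuous and bounded above''; it blows up like an inverse square root at the endpoints, which is precisely one of the delicate regimes such an argument must handle.
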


In particular, Leja sequences on an interval satisfy property (1). We return to this topic in section 7.

\section{Recovering the function $V_K$.} In section 2.4 of \cite{BBCL}, an elementary argument showed that for arrays satisfying property (1) of Definition \ref{props}, the Lebesgue functions $\Lambda_n(z)$ can be used to recover $V_K$ in the sense that 
\begin{equation}\label{lebfcn}\lim_{n\to \infty} \log \Lambda_n(z)=V_K(z), \ z \in \CC^d.\end{equation}
This was proved in the univariate case but the same proof works in all dimensions. Property (2) is not sufficient for (\ref{lebfcn}) to hold. In this section, we investigate special families of polynomials which can be used to recover the extremal function $V_K$.

Zaharjuta's proof of the existence of the limit in (\ref{zahlim}) introduced the useful notion of {\it directional Chebyshev constants}. Let $e_1(z),...,e_j(z),...$ be a listing of the monomials
$\{e_i(z)=z^{\alpha(i)}=z_1^{\alpha_1}\cdots z_d^{\alpha_d}\}$ in
$\CC^d$ indexed using a lexicographic ordering on the multiindices $\alpha=\alpha(i)=(\alpha_1,...,\alpha_d)\in {\bf N}^d$, but with deg$e_i=|\alpha(i)|$ nondecreasing. Define the class of polynomials $$P_i=P(\alpha(i)):=\{e_i(z)+\sum_{j<i}c_je_j(z)\};  $$
and the Chebyshev constants
$$T(\alpha):= \inf \{||p||_K:p\in P_i\}.$$ 
We write $t_{\alpha,K}:=t_{\alpha(i),K}$ for a Chebyshev polynomial; i.e., $t_{\alpha,K} \in P_i$ and $||t_{\alpha,K}||_K=T(\alpha)$. Let
$\Sigma=\Sigma_d$ denote the standard simplex  in $\RR^d$; i.e.,
$$\Sigma = \{\theta =(\theta_1,...,\theta_d)\in \RR^d: \sum_{j=1}^d\theta_j=1, \ \theta_j\geq 0, \ j=1,...,d\},$$
and let
$$\Sigma^0 :=\{\theta \in \Sigma:  \ \theta_j > 0, \ j=1,...,d\}.$$
For  all $\theta \in \Sigma^0$, the limit
$$\tau(K,\theta):= \lim_{\alpha/|\alpha|\to \theta} T(\alpha)^{1/|\alpha|}$$
exists and is called the {\it directional Chebyshev constant} for $K$ in the direction $\theta$. Zaharjuta showed that
\begin{equation}\label{zahform}\delta(K)= \exp \bigl[\frac{1}{\hbox{meas}(\Sigma)}\int_{\Sigma^0} \log \tau(K,\theta)d\theta\bigr].\end{equation}

In \cite{bloompc}, the following theorem was proved.

\begin{theorem}[\cite{bloompc}]\label{togetvk} Let $K\subset \CC^d$ be compact, $L-$regular, and polynomially convex. Let $\{p_j\}$ be a sequence of polynomials such that for all $\theta \in \Sigma^0$, there exists a subsequence
$Y_{\theta}\subset {\bf Z}^+$ with $p_j\in P(\alpha_j), \ j\in Y_{\theta}$ and
$$\lim_{j\in Y_{\theta}} ||p_j||_K^{1/{\rm deg}p_j}=\tau(K,\theta).$$
Then
$$\bigl[\limsup_{j\to \infty}{1\over {\rm deg}p_j}\log {|p_j(z)|\over ||p_j||_K}\bigr]^*=V_{K}(z), \ z\not \in K. $$
\end{theorem}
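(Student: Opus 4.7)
The plan is to prove the two inequalities $[\limsup_j u_j]^*\le V_K$ and $[\limsup_j u_j]^*\ge V_K$ off $K$, where
$$u_j(z) := \frac{1}{\deg p_j}\log\frac{|p_j(z)|}{||p_j||_K}.$$
The upper bound is immediate from the definition of $V_K$: each $u_j$ lies in $L(\CC^d)$ and is nonpositive on $K$, hence $u_j\le V_K$ pointwise. Taking $\limsup$ and then the upper-semicontinuous regularization preserves the inequality, and $L$-regularity gives $V_K^* = V_K$.

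For the lower bound I would invoke Zaharjuta's directional Chebyshev machinery, which underpins the existence of the limit (\ref{zahlim}). For each $\theta\in\Sigma^0$ there is a directional extremal function $V_{K,\theta}$, defined as the upper envelope of plurisubharmonic functions of $\theta$-logarithmic growth at infinity that are nonpositive on $K$. Zaharjuta's fundamental identity, valid under $L$-regularity, asserts
$$V_K(z) = \bigl[\sup_{\theta\in\Sigma^0} V_{K,\theta}(z)\bigr]^*,$$
and moreover $V_{K,\theta}$ is recovered as the regularized $\limsup$ of $\frac{1}{|\alpha|}\log(|t_{\alpha,K}(z)|/T(\alpha))$ taken over $\alpha/|\alpha|\to\theta$.

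The hypothesis of the theorem is tailored precisely to allow substituting $\{p_j\}_{j\in Y_\theta}$ in place of the true Chebyshev polynomials $t_{\alpha_j,K}$: both have the same leading monomial (since $p_j\in P(\alpha_j)$) and share the same asymptotic sup-norm growth rate $\tau(K,\theta)$. A comparison argument then identifies
$$\bigl[\limsup_{j\in Y_\theta} u_j(z)\bigr]^* = V_{K,\theta}(z).$$
Since $\limsup_j u_j \ge \limsup_{j\in Y_\theta} u_j$ for every fixed $\theta$, one gets $[\limsup_j u_j]^*\ge V_{K,\theta}$ for every $\theta\in\Sigma^0$. Taking the supremum over $\theta$ and regularizing once more yields $[\limsup_j u_j]^*\ge V_K$, completing the proof off $K$.

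The main obstacle is the comparison step that replaces $t_{\alpha_j,K}$ by $p_j$ inside the directional identification of $V_{K,\theta}$. The $p_j$ are only asymptotically extremal in sup-norm, not actually extremal, so one cannot quote Zaharjuta verbatim. The standard remedy combines the Bernstein-Walsh inequality with a product-and-normalize argument: given a true Chebyshev polynomial $t_{\beta,K}$ with $\beta/|\beta|$ close to $\theta$, one forms suitable products of the $p_j$'s with correction monomials to mimic $t_{\beta,K}$ at comparable total degree; the rate $||p_j||_K^{1/\deg p_j}\to\tau(K,\theta)$ supplies exactly the subexponential slack needed to ensure the normalized logarithms have the same regularized $\limsup$ as the Chebyshev sequence, and hence realize $V_{K,\theta}$.
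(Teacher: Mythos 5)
This theorem is stated in the survey as a cited result from \cite{bloompc}; the paper gives no proof of it, so your proposal can only be judged on its own terms. Your upper bound is correct and is the standard half: each $u_j$ belongs to $L(\CC^d)$ and is $\leq 0$ on $K$, so $u_j\leq V_K$, and $L$-regularity lets you regularize. The lower bound, however, has two genuine gaps. First, the "directional extremal function" $V_{K,\theta}$ and the identity $V_K=\bigl[\sup_{\theta\in\Sigma^0}V_{K,\theta}\bigr]^*$ are not results of Zaharjuta: his directional objects are the \emph{constants} $\tau(K,\theta)$, used to compute the number $\delta(K)$ via \eqref{zahform}, not pluripotential envelopes. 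What actually plays this role in Bloom's argument is Siciak's theorem (\cite{siciak}) representing $V_K(z)$ for $z\notin \hat K$ as the regularized $\limsup$ of $\frac{1}{|\alpha|}\log\bigl(|t_{\alpha,K}(z)|/\|t_{\alpha,K}\|_K\bigr)$ over the genuine Chebyshev polynomials. You would need to either quote that result or prove your envelope identity, including a meaningful definition of "psh of $\theta$-logarithmic growth" and a proof that the supremum over $\theta\in\Sigma^0$ (boundary directions excluded) regularizes to $V_K$; none of this is automatic.

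Second, and more seriously, the step you yourself flag as "the main obstacle" --- passing from the true Chebyshev polynomials $t_{\alpha,K}$ to the merely asymptotically extremal $p_j$ --- is precisely the content of the theorem, and your sketch of it does not work as described. Products of the $p_j$ have leading multi-index $\sum_j\alpha_j$, so you cannot steer them to an arbitrary target index $\beta$ with $\beta/|\beta|$ near $\theta$ by tacking on "correction monomials"; moreover $\|pq\|_K\leq\|p\|_K\|q\|_K$ goes the wrong way for the normalization you need, and knowing that $\|p_j\|_K^{1/\deg p_j}$ and $\|t_{\alpha_j,K}\|_K^{1/|\alpha_j|}$ have the same limit $\tau(K,\theta)$ says nothing by itself about the relative size of $|p_j(z)|$ and $|t_{\alpha_j,K}(z)|$ at a fixed $z\notin\hat K$. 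The implication "asymptotically minimal sup-norm on $K$ among monic polynomials of index $\alpha_j$ $\Rightarrow$ asymptotically maximal growth off $K$ after normalization" is the actual theorem; as written, your argument assumes it rather than proves it. To repair the proof you would need to carry out the comparison honestly, e.g.\ by expanding extremal polynomials for $\Phi_n(z)=\sup\{|p(z)|:p\in\mathcal P_n,\ \|p\|_K\le1\}$ in the monic families and controlling the coefficients subexponentially, which is where the hypothesis on every $\theta\in\Sigma^0$ is really used.
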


The family $\{p_j\}$ is said to be $\theta-aT$ -- ``theta-asymptotically Chebyshev'' -- if the property in Theorem \ref{togetvk} holds. Bloom proved, in particular, that {\it Leja polynomials} associated to a Leja sequence have this $\theta-aT$ property. Using Theorem \ref{togetvk}, he proved an interesting corollary related to our condition (1). To this end, we begin with a {\it triangular} array $\{B_{sj}\}_{j=1,...,s; \ s=1,2,...}$ with the property that $VDM(B_{s1},...,B_{ss})\not = 0$ for each $s$. Define, for each multiindex $\alpha=\alpha(s)$, the polynomial 
$$G_{\alpha(s)}(z):=\frac{VDM(B_{s1},...,B_{ss},z)}{VDM(B_{s1},...,B_{ss})}.$$
Note that $G_{\alpha(s)}(z)=z^{\alpha(s)}+\sum_{j<s}c_j z^{\alpha(j)}\in P_s$. Moreover, it is straightforward to see that
$$G_{\alpha(s)}(z)=z^{\alpha(s)}-L_{\alpha(s)}(z^{\alpha(s)}) = t_{\alpha(s),K}-L_{\alpha(s)}(t_{\alpha(s),K})$$
where $L_{\alpha(s)}(f)$ is the LIP for $f$ and the points $B_{s1},...,B_{ss}$; i.e., 
$$L_{\alpha(s)}(f)(z)=\sum_{j=1}^s f(B_{sj})l_{sj}(z)$$
and
$$ l_{sj}(z)=\frac{VDM(B_{s1},...,B_{s(j-1)}, z, B_{s(j+1)}, ...B_{ss})}{VDM(B_{s1},...,B_{ss})}.$$
Letting 
$$\Lambda_{\alpha(s)}:=\sup_{z\in K}\sum_{j=1}^s |l_{sj}(z)|,$$
we have the following.

\begin{theorem}[\cite{bloompc}] \label{thm42} If $K$ is $L-$regular and $\lim_{|\alpha(s)|\to \infty}\Lambda_{\alpha(s)}^{1/|\alpha(s)|}=1$, then
$$V_K(z)=\bigl[\limsup_{|\alpha(s)|\to \infty} \frac{1}{|\alpha(s)|}\log \frac{G_{\alpha(s)}(z)}{||G_{\alpha(s)}||_K}\bigr]^*$$
for $z\in \CC^d \setminus \hat K$.
\end{theorem}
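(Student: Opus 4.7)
The plan is to reduce Theorem \ref{thm42} directly to Theorem \ref{togetvk} by verifying that the family $\{G_{\alpha(s)}\}$ is $\theta$-aT in every direction $\theta\in\Sigma^0$. Since $G_{\alpha(s)}\in P_s$ by the observation made just before the theorem, the only thing to check is the norm asymptotic $\|G_{\alpha(s)}\|_K^{1/|\alpha(s)|}\to\tau(K,\theta)$ along subsequences with $\alpha(s)/|\alpha(s)|\to\theta$. Once this is in hand, Theorem \ref{togetvk} gives exactly the claimed formula, valid off $K$ (and hence a fortiori off $\hat K$).

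The key identity to exploit is the one already recorded in the excerpt, namely
\[
G_{\alpha(s)}=t_{\alpha(s),K}-L_{\alpha(s)}\bigl(t_{\alpha(s),K}\bigr).
\]
From this I get a two-sided sandwich between $\|G_{\alpha(s)}\|_K$ and the Chebyshev constant $T(\alpha(s))$. The lower bound $T(\alpha(s))\le \|G_{\alpha(s)}\|_K$ is immediate because $G_{\alpha(s)}\in P_s$, so by definition of $T$. For the upper bound I would use the Lebesgue-type estimate: since $L_{\alpha(s)}$ is the interpolation operator with Lebesgue constant $\Lambda_{\alpha(s)}$, a standard triangle inequality yields
\[
\|G_{\alpha(s)}\|_K \le \|t_{\alpha(s),K}\|_K + \|L_{\alpha(s)}(t_{\alpha(s),K})\|_K \le (1+\Lambda_{\alpha(s)})\,T(\alpha(s)).
\]
Taking $|\alpha(s)|$-th roots and invoking the hypothesis $\Lambda_{\alpha(s)}^{1/|\alpha(s)|}\to 1$, I conclude
\[
\lim_{|\alpha(s)|\to\infty}\frac{\|G_{\alpha(s)}\|_K^{1/|\alpha(s)|}}{T(\alpha(s))^{1/|\alpha(s)|}}=1.
\]

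Now for any $\theta\in\Sigma^0$, I select any subsequence $\{s_k\}$ with $\alpha(s_k)/|\alpha(s_k)|\to\theta$; Zaharjuta's theorem on the existence of the directional Chebyshev constant gives $T(\alpha(s_k))^{1/|\alpha(s_k)|}\to\tau(K,\theta)$, and the sandwich above transfers this to $\|G_{\alpha(s_k)}\|_K^{1/|\alpha(s_k)|}\to\tau(K,\theta)$. Thus $\{G_{\alpha(s)}\}$ is $\theta$-aT for every $\theta\in\Sigma^0$, and Theorem \ref{togetvk} applies to yield the claimed equality. The only real obstacle I anticipate is purely bookkeeping: making sure that the indexing by $s$ versus by $\alpha$ is compatible with the hypothesis of Theorem \ref{togetvk}, which allows a different subsequence for each $\theta$, and confirming $L$-regularity is used only at the step of invoking that theorem (it is not needed in the sandwich argument itself). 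No deeper pluripotential theory beyond Theorem \ref{togetvk} enters.
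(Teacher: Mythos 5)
Your proposal is correct and follows exactly the route the paper indicates: the theorem is stated without proof, but the remark immediately after it says that the Lebesgue-constant hypothesis implies $\{G_{\alpha(s)}\}$ is $\theta$-aT, which is precisely the sandwich $T(\alpha(s)) \le \|G_{\alpha(s)}\|_K \le (1+\Lambda_{\alpha(s)})\,T(\alpha(s))$ you derive from the identity $G_{\alpha(s)} = t_{\alpha(s),K} - L_{\alpha(s)}(t_{\alpha(s),K})$, followed by an appeal to Theorem \ref{togetvk}. The one small point to tidy is that Theorem \ref{togetvk} assumes polynomial convexity, so when $K \neq \hat K$ you should apply it to $\hat K$ rather than to $K$ (legitimate because $\|p\|_K = \|p\|_{\hat K}$ for polynomials, so the Chebyshev constants and the $\theta$-aT property are unchanged), which is exactly why the conclusion is stated only for $z \in \CC^d \setminus \hat K$.
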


The hypothesis $\lim_{|\alpha(s)|\to \infty}\Lambda_{\alpha(s)}^{1/|\alpha(s)|}=1$ implies that the family of polynomials $\{G_{\alpha(s)}\}$ is $\theta-aT$. In particular, {\it Fekete polynomials} for each $s=1,2,...$ defined from an array that maximizes $|VDM(\zeta_1,...,\zeta_s)|$ over $(\zeta_1,...,\zeta_s)\in K^s$ are shown to have this property. Note that in this case, for $s=N=\dim \mathcal P_n$, the $s$ points $B_{s1},...,B_{ss}$ coincide with the $n-$th order (degree) Fekete points $A_{n1},...,A_{nN}$. A weighted version of Theorem \ref{togetvk} was proved as Theorem 3.5 of \cite{bloomlev2}. We will utilize this in section 6.

\section{Bernstein-Markov property.} For a compact set $K\subset \CC^d$ and a measure $\nu$ on $K$, we say that the pair $(K,\nu)$ satisfies a Bernstein-Markov property if there exist constants  $\{M_n\}$ with $\limsup_{n\to \infty} M_n^{1/n}=1$ and all polynomials $Q_n\in \mathcal P_n$ satisfy
$$ ||Q_n||_K\leq M_n||Q_n||_{L^2(\nu)}.$$
In \cite{BBCL} it was shown (Theorem 3.3) how one could recover the transfinite diameter $\delta(K)$ from asymptotics of Gram determinants associated to a Bernstein-Markov pair $(K,\nu)$. More recently, strong Bergman asymptotics were proved in \cite{BBN} in this setting: if $(K,\nu)$ satisfies a Bernstein-Markov property, then 
\begin{equation} \label{strongba} \frac{1}{N}B_n^{\nu} d\nu \to \mu_{K} \ \hbox{weak-}*
\end{equation}
where 
$$B_n^\nu(z):=\sum_{j=1}^N|q_j(z)|^2$$
is the $n-$th Bergman function for $K,\nu$ and 
$\{q_1,q_2,\cdots,q_N\}$ is an orthonormal basis for ${\mathcal P}_n$ with respect to $L^2(\nu)$. Thus it is natural to ask which compact sets $K$ admit measures $\nu$ satisfying a Bernstein-Markov property. The following result was proved in \cite{blldp}; since the proof is short, we include it.

\begin{proposition}[\cite{blldp}]  \label{strongbm} Let $K\subset \CC^d$ be an arbitrary compact set. Then there exists a probability measure $\nu$ such that $(K,\nu)$ satisfies a Bernstein-Markov property.
 \end{proposition}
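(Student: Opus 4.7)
The plan is to construct $\nu$ as a weighted sum of atomic probability measures supported on ``Fekete-type'' points for $\mathcal{P}_n$ restricted to $K$, with the weights decaying only like $1/n^2$ so that moving between the $L^2$-norms associated to the individual pieces of $\nu$ costs only a subexponential factor in $n$. The key structural input is that compactness of $K$ always furnishes, for each $n$, finitely many points whose associated Lagrange functions are bounded by $1$ in supremum norm on $K$, even when $K$ is pluripolar or sits inside an algebraic hypersurface.

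Concretely, set $V_n := \mathcal{P}_n|_K \subset C(K)$, let $d_n := \dim V_n \leq N$, and fix any basis $f_1,\ldots,f_{d_n}$ of $V_n$. By compactness one can choose $B_{n1},\ldots,B_{nd_n}\in K$ maximizing $|\det[f_i(B_k)]_{i,k=1,\ldots,d_n}|$, and this maximum is strictly positive since the $f_i$ are, by definition of $V_n$, linearly independent on $K$. By Cramer's rule the interpolant $L_{nj}\in V_n$ satisfying $L_{nj}(B_{ni})=\delta_{ij}$ is a ratio of two such determinants, and the extremal property of the $B_{nj}$ immediately yields $|L_{nj}(z)|\leq 1$ for all $z\in K$. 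Set $\nu_n:=\frac{1}{d_n}\sum_{j=1}^{d_n}\delta_{B_{nj}}$ and define
$$\nu := \frac{6}{\pi^{2}}\sum_{n=1}^{\infty}\frac{1}{n^{2}}\nu_n,$$
which is a Borel probability measure on $K$.

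Now for $Q_n\in\mathcal{P}_n$, either $Q_n\equiv 0$ on $K$ (in which case both sides of the Bernstein--Markov inequality vanish, since $\nu$ is carried by $K$) or else on $K$ we have $Q_n(z)=\sum_{j=1}^{d_n}Q_n(B_{nj})L_{nj}(z)$. The bound $|L_{nj}|\leq 1$ on $K$ combined with Cauchy--Schwarz gives
$$||Q_n||_K\;\leq\;\sum_{j=1}^{d_n}|Q_n(B_{nj})|\;\leq\;\sqrt{d_n}\,\Bigl(\sum_{j=1}^{d_n}|Q_n(B_{nj})|^{2}\Bigr)^{1/2}\;=\;d_n\,||Q_n||_{L^{2}(\nu_n)}.$$
Since $\nu\geq \frac{6}{\pi^{2}n^{2}}\nu_n$ as measures, $||Q_n||_{L^{2}(\nu_n)}\leq \frac{\pi n}{\sqrt{6}}||Q_n||_{L^{2}(\nu)}$, and therefore $||Q_n||_K\leq M_n||Q_n||_{L^{2}(\nu)}$ with $M_n := \frac{\pi n}{\sqrt{6}}\,d_n$. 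Because $d_n\leq N={n+d\choose n}$ grows only polynomially in $n$, $\limsup_{n\to\infty}M_n^{1/n}=1$, as required.

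The only delicate step is allowing for the possibility that $\mathcal{P}_n|_K\subsetneq \mathcal{P}_n$, which occurs precisely when $K$ lies in an algebraic subvariety of degree $\leq n$; this is handled transparently by working with $V_n$ and its true dimension $d_n$ throughout, since the kernel of restriction contributes zero to both sides of the inequality. Everything else is elementary: an extremal-determinant bound for the Lagrange functions and the subexponential decay of $1/n^{2}$.
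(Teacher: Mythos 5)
Your proposal is correct and follows essentially the same route as the paper: an atomic measure concentrated on degree-$n$ Fekete points for $\mathcal P_n|_K$ (whose Lagrange functions are bounded by $1$ on $K$ by the extremal-determinant property), summed with polynomially decaying weights so that passing from $\|\cdot\|_{L^2(\nu_n)}$ to $\|\cdot\|_{L^2(\nu)}$ costs only a polynomial factor. The only differences are cosmetic --- weights $1/n^2$ in place of the paper's $1/(k(\log k)^2)$, and Cauchy--Schwarz on the finite sum in place of the paper's $\|p\|_{L^1(\nu)}\le\|p\|_{L^2(\nu)}$ step.
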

 
 \begin{proof} To construct $\nu$, we first observe that if $K$ is a finite set, any measure $\nu$ which puts positive mass at each point of $K$ will work. If $K$ has infinitely many points, for each $k=1,2,...$ let $m_k =$dim$\mathcal P_k(K)$, the holomorphic polynomials on $\CC^{d}$ restricted to $K$. Then $\lim_{k\to \infty} m_k =\infty$ and $m_k\leq {d+k\choose k}=0((d)^k)$. For each $k$, let 
 $$\mu_k :=\frac{1}{m_k} \sum_{j=1}^{m_k} \delta_{z_j^{(k)}}$$
 where $\{z_j^{(k)}\}_{j=1,...,m_k}$ is a set of Fekete points of order $k$ for $K$; i.e., if $\{e_1,...,e_{m_k}\}$ is any basis for $\mathcal P_k(K)$, 
\begin{equation}\label{fekhere}\bigl |\det [e_i(z_j^{(k)})]_{i,j=1,...,m_k}\bigr|=\max_{q_1,...,q_{m_k}\in K}\bigl |\det[e_i(q_j)]_{i,j=1,...,m_k}\bigr|.\end{equation}
 Define
 $$\nu:=c\sum_{k=3}^{\infty} \frac{1}{k(\log k)^2}\mu_k$$
 where $c>0$ is chosen so that $\nu$ is a probability measure. 
 If $p\in \mathcal P_k(K)$, we have
 $$p(z)=\sum_{j=1}^{m_k} p(z_j^{(k)}) l_j^{(k)}(z)$$
 where $l_j^{(k)}\in \mathcal P_k(K)$ with $l_j^{(k)}(z_k^{(k)})=\delta_{jk}$. We have  
 $||l_j^{(k)}||_K=1$ from (\ref{fekhere}) and hence
 $$||p||_K \leq \sum_{j=1}^{m_k} |p(z_j^{(k)})|.$$
 On the other hand, 
 $$||p||_{L^2(d\nu)}\geq ||p||_{L^1(d\nu)}\geq  \frac{c}{k(\log k)^2}\int_K |p|d\mu_k$$
 $$= \frac{c}{km_k(\log k)^2}\sum_{j=1}^{m_k} |p(z_j^{(k)})|.$$
 Thus we have
 $$||p||_K \leq {km_k(\log k)^2\over c} ||p||_{L^2(d\nu)}.$$
\end{proof}

\noindent This gives a positive answer to the first part of question 2 in \cite{BBCL}. The second part has a negative answer, as the simple example of 
$$K=\{z\in \CC: |z|\leq 1\}\cup \{2\}$$
shows.

For certain measures $\nu$ with compact and non-polar support on the {\it real} line $\RR\subset \CC$, {\it pointwise} asymptotics of the Bergman functions $\{B_n^\nu\}$ are known (cf., \cite{totik}). In the higher dimensional setting, very little is known. Bos, et al \cite{BDM} consider one natural analogue of the interval, namely, the real unit ball 
$$B_d:=\{(z_1,...,z_d)\in \CC^d: \Im z_j  =0, \ j=1,...,d; \ \sum_{j=1}^d(\Re z_j)^2\leq 1\}$$
in $\RR^d \subset \CC^d$. Writing $x_j:=\Re z_j$ and $x=(x_1,...,x_d)$, it is known that 
$$d\mu_{B_d}=\omega^0(x)dx:=\frac{2}{\omega_d \sqrt {1-\sum_{j=1}^d x_j^2}}dx$$
where $dx=dx_1\wedge \cdots \wedge dx_d$ is $d-$dimensional Lebesgue measure and $\omega_d$ is the surface area of the unit sphere $S^d\subset \RR^{d+1}$. Lemma 1 in \cite{BDM} shows that if $d\nu(x)=\omega(x)dx$ where $\omega(x)=\omega(-x)$ is a positive {\it centrally symmetric} weight satisfying a certain Lipschitz property, then 
\begin{equation}\label{bosres}\lim_{n\to \infty} \frac{1}{N}B_n^{\mu_{B_d}}(x)= \frac{\omega^0(x)}{\omega(x)}\end{equation}
and the convergence is uniform on compact subsets of the $\RR^d-$interior of $B_d$.
The first step is the special case where $\omega(x)=\omega^0(x)$ (note in this case the right-hand-side of (\ref{bosres}) is the constant function $1$; it is also shown that the limit is the constant function $2$ on the sphere $|x|=1$).

Having a Bernstein-Markov measure allows one to replace Chebyshev polynomials by orthogonal polynomials in certain asymptotic computations. Using this idea, the exact calculation of the transfinite diameter of the real ball $B_d:=\{x\in \RR^d\,:\, |x|\le 1\}$ and the real unit simplex, $S =S_d:=\{x\in\RR^d_+\,:\,\sum_{j=1}^dx_j\le1\}$ in $\RR^d\subset \CC^d$ was recently achieved in \cite{boscalc}.

\begin{proposition}[\cite{boscalc}] \label{tdcalc} The transfinite diameter of the unit ball $B_d$ is:
\begin{enumerate}
\item for $d$ even,
\[\delta(B_d)=\frac{1}{2}\exp\left(-\frac{1}{4}\frac{2d+1}{d}\sum_{j=1}^d\frac{1}{j} +\frac{1}{2}+\frac{1}{2}\log(2) 
+\frac{1}{4d}\sum_{j=1}^d\frac{(-1)^j}{j}\right),
\]
\item for $d$ odd,
\[\delta(B_d)=\frac{1}{2}\exp\left(-\frac{1}{4}\frac{2d+1}{d}\sum_{j=1}^d\frac{1}{j} +\frac{1}{2}+\frac{d-1}{2d}\log(2) 
-\frac{1}{4d}\sum_{j=1}^d\frac{(-1)^j}{j}\right).
\]
\end{enumerate}
The transfinite diameter of the simplex $S_d$ is 
\[\delta(S_d)=(\delta(B_d))^2.\]

\end{proposition}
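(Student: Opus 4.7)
The plan is to use Zaharjuta's formula \eqref{zahform}, compute the directional Chebyshev constants $\tau(B_d,\theta)$ for $\theta\in\Sigma^0$ in closed form, and then evaluate the resulting integral explicitly.  The first step is to trade Chebyshev polynomials for $L^2$--orthonormal polynomials with respect to the equilibrium measure $d\mu_{B_d}=\omega^0(x)dx$.  By Proposition \ref{strongbm} any compact set admits a Bernstein--Markov measure; for the real ball the explicit pair $(B_d,\omega^0 dx)$ is itself Bernstein--Markov, as can be extracted from \cite{BDM} and is consistent with the pointwise Bergman limit \eqref{bosres} (with $\omega=\omega^0$).  A standard consequence is the identification
$$\tau(B_d,\theta)=\lim_{\alpha/|\alpha|\to\theta}\gamma_\alpha^{-1/|\alpha|},\qquad \theta\in\Sigma^0,$$
where $\gamma_\alpha$ is the leading coefficient (in front of $x^\alpha$) of the $L^2(\omega^0 dx)$--orthonormal polynomial produced by Gram--Schmidt on the monomial basis in graded lexicographic order.

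Next I would extract the $\gamma_\alpha$ asymptotically.  Central symmetry of $\omega^0$ splits the Gram matrix into parity blocks, so the required moments reduce to $\int_{B_d}x^{2\beta}\omega^0(x)dx$, a Dirichlet/Beta integral evaluable in closed form through Gamma functions.  Applying Stirling's expansion -- with care for the half--integer arguments that arise when $d$ is odd -- yields a closed expression for $\log\tau(B_d,\theta)$ as a sum of terms of type $\theta_j\log\theta_j$, a constant in $\theta$, and a parity--dependent correction reflecting which exponents survive the central--symmetry filter.

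The third step is to substitute into \eqref{zahform} and integrate over $\Sigma$ using the standard Dirichlet identities for $\int_\Sigma \theta_j\,d\theta$ and $\int_\Sigma \theta_j\log\theta_j\,d\theta$; the latter contribute harmonic sums $\sum_{k=1}^d 1/k$, while the parity correction produces the alternating sum $\sum_{k=1}^d(-1)^k/k$.  Consolidating the many $\log 2$, Euler--Mascheroni, and harmonic terms and tracking the $1/d$ factor that appears in the normalization then produces the two cases in the statement.  For the simplex identity $\delta(S_d)=\delta(B_d)^2$ I would use the folding map $\phi:B_d\to S_d$, $\phi(x)=(x_1^2,\dots,x_d^2)$: any polynomial $p$ of degree $n$ on $S_d$ pulls back to an even polynomial of degree $2n$ on $B_d$ with $\|p\|_{S_d}=\|p\circ\phi\|_{B_d}$, and by central symmetry the Chebyshev polynomial of index $2\alpha$ on $B_d$ may be taken even, descending to $S_d$ to give $T_{S_d}(\alpha)=T_{B_d}(2\alpha)$.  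Passing to $|\alpha|$--th roots then yields $\tau(S_d,\theta)=\tau(B_d,\theta)^2$, whence $\delta(S_d)=\delta(B_d)^2$ from \eqref{zahform}.

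The main obstacle is the second step: carrying the Gram--Schmidt leading--coefficient asymptotics through cleanly enough that the even/odd--$d$ dichotomy emerges with the correct coefficient $\tfrac{1}{4d}$ in front of $\sum(-1)^j/j$ and the correct weights on the harmonic sum.  The Bernstein--Markov reduction and the folding argument for the simplex are comparatively soft; the precise bookkeeping of Stirling expansions applied to Dirichlet moments, and the consolidation of many lower--order terms into the exact closed--form constants stated, is where the real effort lies.
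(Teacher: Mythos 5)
The paper does not actually prove this proposition: it is quoted from \cite{boscalc}, and the only methodological hint given is the sentence preceding the statement, namely that a Bernstein--Markov measure ``allows one to replace Chebyshev polynomials by orthogonal polynomials in certain asymptotic computations.'' Your outline is exactly that strategy -- Zaharjuta's formula \eqref{zahform}, the identification of $\tau(B_d,\theta)$ with leading-coefficient asymptotics of polynomials orthonormal with respect to $\mu_{B_d}$, Dirichlet/Beta moment integrals, Stirling, and integration over $\Sigma$ -- so in terms of approach you are aligned with the source. Your folding argument $\phi(x)=(x_1^2,\dots,x_d^2)$ for $\delta(S_d)=\delta(B_d)^2$ is also the natural one; note only that to descend a Chebyshev polynomial of index $2\alpha$ from $B_d$ to $S_d$ you need it to be a polynomial in $x_1^2,\dots,x_d^2$, which requires averaging over the full sign group $\{\pm1\}^d$ (under which $B_d$ is invariant), not merely the central symmetry $x\mapsto -x$; and one should check that this averaging and the pullback respect the graded ordering defining the classes $P_i$, so that $T_{S_d}(\alpha)=T_{B_d}(2\alpha)$ and hence $\tau(S_d,\theta)=\tau(B_d,\theta)^2$.

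The genuine gap is that the proposal stops exactly where the content of the proposition begins. Everything you describe up to and including ``a closed expression for $\log\tau(B_d,\theta)$ as a sum of terms of type $\theta_j\log\theta_j$, a constant, and a parity-dependent correction'' is asserted, not derived: you do not produce the expression for $\tau(B_d,\theta)$, you do not carry out the Stirling expansion of the Gamma-function moments (where the even/odd-$d$ dichotomy and the coefficient $\tfrac{1}{4d}$ of $\sum(-1)^j/j$ must actually be seen to arise), and you do not perform the integration over $\Sigma$ that is supposed to yield the harmonic and alternating sums. You candidly flag this as ``where the real effort lies,'' but that effort is the proof; without it there is no way to verify that the two displayed formulas, with their precise constants $\tfrac12$, $\tfrac12\log 2$ versus $\tfrac{d-1}{2d}\log 2$, and the sign flip on the alternating sum, are correct rather than merely of the right general shape. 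A secondary, smaller gap: the step $\tau(B_d,\theta)=\lim\gamma_\alpha^{-1/|\alpha|}$ needs a directional comparison between sup-norm Chebyshev constants and $L^2$ leading coefficients; the Bernstein--Markov property of $(B_d,\omega^0\,dx)$ gives the inequality in one direction degree-by-degree, but the passage to \emph{directional} limits along $\alpha/|\alpha|\to\theta$ is a Zaharjuta-type argument that should be stated, not just invoked.
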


\section{Weighted vs. unweighted.} In the weighted theory, one considers {\it closed} sets which, for certain weights, may be unbounded. To be precise, let $K\subset \CC^d$ be closed and let $w$ be an admissible weight function on $K$:  $w$ is a nonnegative, usc function with
$\{z\in K:w(z)>0\}$ nonpluripolar; if $K$ is unbounded, we require that $w$ satisfies the growth property
\begin{equation} \label{grprop} |z|w(z)\to 0 \ \hbox{as} \ |z|\to \infty, \ z\in K. 
\end{equation}
Let $Q:= -\log w$ and define the weighted extremal function or weighted
pluricomplex Green function $V^*_{K,Q}(z):=\limsup_{\zeta \to z}V_{K,Q}(\zeta)$ where
$$V_{K,Q}(z):=\sup \{u(z):u\in L(\CC^d), \ u\leq Q \ \hbox{on} \ K\}. $$
In the unbounded case, property (\ref{grprop}) is equivalent to 
$$Q(z)-\log |z| \to \ +\infty \ \hbox{as} \ |z|\to \infty \ \hbox{through points in} \ K.$$ 
Due to this growth assumption for $Q$, $V_{K,Q}$ is well-defined and equals $V_{K\cap \mathcal B_R,Q}$ for $R>0$ sufficiently large where $\mathcal B_R=\{z:|z|\leq R\}$ (Definition 2.1 and Lemma 2.2 of Appendix B in \cite{safftotik}). It is known that the support
$$S_w:=\hbox{supp}(\mu_{K,Q})$$ 
of the {\it weighted extremal measure} 
$$\mu_{K,Q}:=\frac{1}{(2\pi)^d}(dd^cV_{K,Q}^*)^d$$
is compact; 
$$S_w \subset S_w^*:=\{z\in K: V_{K,Q}^*(z)\geq Q(z)\};$$
moreover, 
$$V_{K,Q}^*=Q \ \hbox{q.e. on} \ S_w$$
(i.e., $V_{K,Q}^*=Q$ on $S_w \setminus F$ where $F$ is pluripolar); and if $u\in L(\CC^d)$ satisfies $u\leq Q$ q.e. on $S_w$ then $u\leq V_{K,Q}^*$ on $\CC^d$. 

The unweighted case is when $K$ is compact and $w\equiv 1$ ($Q\equiv 0$); we then write $V_K:=V_{K,0}$ to be consistent with the previous notation. 

Even in one variable ($d=1$) the weighted theory introduces new phenomena from the unweighted case. As an elementary example, $\mu_K$ puts no mass on the interior of $K$ (in one variable, the support of $\mu_K$ is the outer boundary of $K$); but this is not necessarily true in the weighted setting. As a simple but illustrative example, taking $K$ to be the closed unit ball $\{z:|z|\leq 1\}$ and $Q(z)=|z|^2$, it is easy to see that $V_{K,Q}=Q$ on the ball $\{z:|z|\leq 1/\sqrt 2\}$ and $V_{K,Q}(z)=\log |z| + 1/2 - \log (1/\sqrt 2)$ outside this ball. One can check that if $K_R$ is the ball  $\{z:|z|\leq R\}$ for $1\leq R \leq \infty$ and $Q(z)=|z|^2$, one has $V_{K_R,Q}=V_{K,Q}$. 

Now let $K\subset \CC^d$ be compact and let
$w$ be an admissible weight function on
$K$.  Generalizing (\ref{vdmordn}), given $\zeta_1,...,\zeta_N\in K$, let
$$W(\zeta_1,...,\zeta_N):=VDM(\zeta_1,...,\zeta_N)w(\zeta_1)^{n}\cdots w(\zeta_N)^{n}$$
$$= \det
\left[
\begin{array}{ccccc}
 e_1(\zeta_1) &e_1(\zeta_2) &\ldots  &e_1(\zeta_N)\\
  \vdots  & \vdots & \ddots  & \vdots \\
e_N(\zeta_1) &e_N(\zeta_2) &\ldots  &e_N(\zeta_N)
\end{array}
\right]\cdot w(\zeta_1)^{n}\cdots w(\zeta_N)^{n}$$
be a (generalized) {\it weighted Vandermonde determinant} of order $n$. Let
$$W_n(K):=\max_{\zeta_1,...,\zeta_N\in K}|W(\zeta_1,...,\zeta_N)|$$
and define an {\it $n-$th order weighted Fekete set for $K$ and $w$} to be a set of $N$ points $\zeta_1,...,\zeta_ N\in K$ with the property that
$$|W(\zeta_1,...,\zeta_N)|=W_n(K).$$
We also write $\delta^{w,n}(K):=W_n(K)^{\frac{d+1}{dnN}}$ and define
$$ \delta^w(K):=\lim_{n\to \infty}\delta^{w,n}(K)=\lim_{n\to \infty}W_{n}(K)^{\frac{d+1}{dnN}}.$$
A proof of the existence of the limit may be found in \cite{BBnew} or \cite{[BL]}; in the latter work one 
defines the circled set 
	$$F=F(K,w):=\{(t,z)=(t,t \lambda )\in \CC^{d+1}: \lambda \in K, \ |t|=w(\lambda)\}$$
	and shows that, indeed, for the closure $\bar F$,
	$$\delta^w(K)=\delta (\bar F)^{\frac{d+1}{d}}.$$
	
	In \cite{BBN} the authors proved a weighted version of $(2) \implies (3)$:
	
	\begin{theorem}[\cite{BBN}] \label{asympwtdfek} Let $K\subset \CC^d$ be compact with admissible weight $w$. For each $n$, take points $x_1^{(n)},x_2^{(n)},\cdots,x_N^{(n)}\in K$ for which 
$$ \lim_{n\to \infty}|W(x_1^{(n)},\cdots,x_N^{(n)})|^{{(d+1)\over  dnN}}=\delta^w(K)$$
$(${\it asymptotically} weighted Fekete points$)$ and let $\mu_n:= \frac{1}{N}\sum_{j=1}^N \delta_{x_j^{(n)}}$. Then
$$
\mu_n \to \mu_{K,Q} \ \hbox{weak}-*.
$$
\end{theorem}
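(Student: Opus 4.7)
The plan is to reduce the weighted statement to the unweighted Theorem \ref{asympfek} via the circled lifting already introduced in the paper. Set $F=F(K,w)\subset\CC^{d+1}$ and work with its closure $\bar F$, using the relation $\delta^w(K)=\delta(\bar F)^{(d+1)/d}$. The key observation is that a monomial $t^{n-|\alpha|}z^\alpha$ of homogeneous degree $n$ in $(t,z)$ evaluated at a point $(t,t\lambda)\in F$ equals $t^n\lambda^\alpha$, whose modulus is exactly $w(\lambda)^n|\lambda^\alpha|$. Hence, under the correspondence sending a basis of $\mathcal P_n(\CC^d)$ to the corresponding homogeneous degree-$n$ monomials in $(t,z)$, a weighted Vandermonde $W(\zeta_1,\ldots,\zeta_N)$ for $(K,w)$ equals (up to a monomial factor $\prod t_j^{n}$ of modulus $\prod w(\zeta_j)^n$ that is already absorbed in $W$) an honest unweighted Vandermonde on $\bar F$ built from the homogeneous basis.

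First I would make this degree bookkeeping precise. The appropriate unweighted Vandermonde to compare with is the one on $\bar F\subset\CC^{d+1}$ built from homogeneous polynomials of degree exactly $n$; the paper's Vandermonde on $\CC^{d+1}$ uses all monomials of degree $\leq n$, so a short combinatorial argument (together with the $S^1$-invariance of $\bar F$) is needed to show that, asymptotically at the rate $1/l_n$, only the top homogeneous component contributes to $V_n(\bar F)$ and, moreover, homogeneous and inhomogeneous transfinite diameters agree up to the exponent shift $(d+1)/d$ given in the definition of $\delta^w(K)$. This is exactly the content pointed to by the reference $\cite{[BL]}$, so I would invoke it rather than redo the computation.

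Next, given asymptotically weighted Fekete points $\{x_j^{(n)}\}_{j=1}^N\subset K$, choose any lifts $\tilde x_j^{(n)}=(t_j^{(n)},t_j^{(n)}x_j^{(n)})\in\bar F$ with $|t_j^{(n)}|=w(x_j^{(n)})$ (for instance $t_j^{(n)}=w(x_j^{(n)})$). By the identification above, $|W(x_1^{(n)},\ldots,x_N^{(n)})|$ equals the modulus of the homogeneous-degree-$n$ Vandermonde at $\tilde x_1^{(n)},\ldots,\tilde x_N^{(n)}$; the asymptotic Fekete hypothesis together with $\delta^w(K)=\delta(\bar F)^{(d+1)/d}$ then says the lifts form an asymptotic Fekete array for $\bar F$ (perhaps after padding the array to reach the full dimension $\binom{n+d+1}{n}$ using auxiliary points that do not affect the empirical measure in the $\CC^d$ projection — here the $S^1$-invariance of $\bar F$ and a standard averaging argument ensure the padding does not destroy the asymptotic Fekete property). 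Apply Theorem \ref{asympfek} to conclude that $\frac1N\sum_j\delta_{\tilde x_j^{(n)}}\to\mu_{\bar F}$ weak-$*$ on $\CC^{d+1}$.

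The final step is to push this convergence back down to $K$. Let $\pi:\bar F\setminus\{t=0\}\to K$, $\pi(t,z)=z/t$. Then $\frac1N\sum_j\delta_{x_j^{(n)}}=\pi_*\bigl(\frac1N\sum_j\delta_{\tilde x_j^{(n)}}\bigr)$, and by continuity of push-forward on bounded weak-$*$ convergent sequences (combined with the fact that $\mu_{\bar F}$ does not charge $\{t=0\}$, which follows from non-pluripolarity of the part of $K$ where $w>0$) the limit $\mu_n\to\pi_*\mu_{\bar F}$ holds. The identification $\pi_*\mu_{\bar F}=\mu_{K,Q}$ is the standard correspondence between weighted equilibrium measures on $K$ and unweighted equilibrium measures on the associated circled set in $\CC^{d+1}$. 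I expect the main obstacle to be precisely this last identification together with the homogeneous/inhomogeneous degree matching in the first step: these are delicate because dimensions $N(n,d)$ and $N(n,d+1)$ do not agree, and one must exploit the $S^1$-symmetry of $\bar F$ to cut the Vandermonde down to its relevant homogeneous block and to verify that the extra monomials involving different powers of $t$ contribute trivially in the $1/l_n$-asymptotics. Once this bookkeeping is in place, the weighted result is a direct corollary of its unweighted sibling.
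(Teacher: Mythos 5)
The paper does not actually prove Theorem \ref{asympwtdfek}; it is quoted from \cite{BBN}, and the paper explicitly remarks that the machinery of \cite{BBnew} behind it \emph{requires} the weighted theory from the outset -- i.e.\ the logical flow in the literature is weighted $\Rightarrow$ unweighted (set $w\equiv 1$), not the reduction you propose. So your attempt must be judged on its own, and it has a genuine gap at the padding step. Your identification of $|W(\zeta_1,\ldots,\zeta_N)|$ with a \emph{homogeneous} degree-$n$ Vandermonde on $\bar F$ is correct (the monomials $t^{n-|\alpha|}z^{\alpha}$, $|\alpha|\le n$, number exactly $N=\binom{n+d}{n}$ and evaluate to $t^n\lambda^{\alpha}$ on $F$), and the relation $\delta^w(K)=\delta(\bar F)^{(d+1)/d}$ from \cite{[BL]} does reconcile the homogeneous and inhomogeneous normalizations at the level of the scalar limits. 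But Theorem \ref{asympfek} applied to $\bar F\subset\CC^{d+1}$ concerns arrays of $\binom{n+d+1}{n}$ points, and your lifted array has only $\binom{n+d}{n}$ of them, a fraction $\binom{n+d}{n}/\binom{n+d+1}{n}=(d+1)/(n+d+1)\to 0$. Whatever auxiliary points you adjoin to complete a degree-$n$ asymptotic Fekete array for $\bar F$, they carry asymptotically \emph{all} of the mass of the padded empirical measure; its weak-$*$ convergence to $\mu_{\bar F}$ therefore places no constraint whatsoever on $\frac1N\sum_j\delta_{\tilde x_j^{(n)}}$. This is not delicate bookkeeping that $S^1$-averaging can repair: the exponent shift $(d+1)/d$ can absorb the dimension mismatch in a scalar limit of determinants, but there is no corresponding renormalization for a sequence of probability measures.

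What your scheme actually needs is a \emph{homogeneous} equidistribution theorem on circled sets: that $N(n,d)$-point arrays on $\bar F$ which are asymptotically extremal for the top-degree homogeneous Vandermonde equidistribute to $\mu_{\bar F}$ (and then that the projection $\pi_*\mu_{\bar F}$ is $\mu_{K,Q}$, which also requires care with normalizations and with the mass of $\mu_{\bar F}$ on $\{t=0\}$). That homogeneous statement is not a corollary of Theorem \ref{asympfek}; it is essentially a restatement of Theorem \ref{asympwtdfek} itself, so the reduction as written is circular at its crucial point. The circled-set lifting of \cite{bloomweight} and \cite{[BL]} is the right dictionary for extremal functions and transfinite diameters, but for the equidistribution result the correct route is the intrinsically weighted energy argument of \cite{BBnew,BBN}.
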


The main results in \cite{BBnew}, which are stated and proved in a much more general setting than weighted pluripotential theory in $\CC^d$ and lead to the results in \cite{BBN}, {\it require} the weighted theory. A self-contained exposition of the weighted pluripotential theoretic setting can be found in \cite{levsur}.	

As a final remark on weighted pluripotential theory, we provide a solution of Problem 3.4 in Appendix B of \cite{safftotik} in the locally regular, $w$ continuous case, correcting the end of section 8 in \cite{bloomlev2}. The problem is to show that the weighted Fekete polynomials can be used to recover the weighted pluricomplex Green function $V_{K,Q}$ in the sense of
Theorem 3.5 of \cite{bloomlev2}; see (\ref{amjfix}) below. Let
$K\subset \CC^d$ be locally regular and let
$w$ be a continuous admissible weight on $K$. We define {\it weighted Chebyshev constants} 
$$\tau_i^w(K):=\inf \{||w^{|\alpha (i)|}p||_K:p\in P_i\}^{1\over |\alpha (i)|}$$
and we let $t^w_{\alpha,K}$ denote a weighted Tchebyshev polynomial; i.e., $t^w_{\alpha,K}$ is of the form $w^{\alpha(i)}p$ with $p\in P_i$ and 
$||t^w_{\alpha,K}||_K= \tau_i^w(K)^{|\alpha(i)|}$.

Let $m_s={d+s \choose s}$. Note that $|\alpha(i)|=s$ for $m_{s-1}+1\leq i \leq m_s$. Given $\zeta_1,...,\zeta_ i\in K$, let
$$W(\zeta_1,...,\zeta_i):=VDM(\zeta_1,...,\zeta_i)w(\zeta_1)^{|\alpha (i)|}\cdots w(\zeta_i)^{|\alpha (i)|}.$$
Generalizing the notion of an $n-$th order weighted Fekete set for degree $n$, for each $i=1,2,...$, an $i-$th weighted Fekete set for $K$ and $w$ will be a set of $i$ points $\zeta_1,...,\zeta_ i\in K$ with the property that
$$W_i:=|W(\zeta_1,...,\zeta_i)|=\sup_{\xi_1,...,\xi_i\in K}|W(\xi_1,...,\xi_i)|.$$
Fix $i$ with $|\alpha(i)|=s$. We will define {\it weighted Fekete polynomials} $p_i$ for each positive integer $i$ with $|\alpha(i)|=|\alpha(i-1)|=s$; i.e., $m_{s-1}+1\leq i-1 \leq m_s$. Choose an $(i-1)-$st weighted Fekete set $\zeta_1,...,\zeta_ {i-1}$ and form the weighted polynomial
$$w(z)^{|\alpha(i)|}p_i(z)=w(z)^{|\alpha(i)|}\{e_i(z)+\sum_{j<i}c_je_j(z)\}:=\frac{W(\zeta_1,...,\zeta_{i-1},z)}{W_{i-1}}.$$
Thus 
$$\frac{W_i}{W_{i-1}}\geq ||w^{|\alpha(i)|}p_i||_K\geq \tau_i^w(K)^{|\alpha(i)|}.$$
Next we choose an $i-$th weighted Fekete set $\zeta_1,...,\zeta_ i$. In the expansion of the determinant of this weighted Vandermonde 
$$W(\zeta_1,...,\zeta_i):=VDM(\zeta_1,...,\zeta_i)w(\zeta_1)^{|\alpha (i)|}\cdots w(\zeta_i)^{|\alpha (i)|},$$
we replace the last row $w(\zeta_k)^{|\alpha(i)|}e_i(\zeta_k), \ k=1,...,i$ by $t^w_{\alpha(i),K}(\zeta_k), \ k=1,...,i$. Expanding the weighted Vandermonde determinant by the last row, 
$$W_i \leq \sum_{k=1}^i|t^w_{\alpha(i),K}(\zeta_k)|\cdot |W(\zeta_1,...,\zeta_{i-1}|$$
$$\leq i\tau_i^w(K)^{|\alpha(i)|}W_{i-1}.$$
Thus
$$\tau_i^w(K)^{|\alpha(i)|}\leq   ||w^{|\alpha(i)|}p_i||_K \leq  \frac{W_i}{W_{i-1}}\leq i\tau_i^w(K)^{|\alpha(i)|}.$$
The sequence of weighted polynomials $\{w^{|\alpha(i)|}p_i: \alpha(i)\not =(s,0,...,0), \ s=1,2,... \}$ satisfies the hypothesis of Theorem 3.5 of \cite{bloomlev2} -- note that $(1,0,...,0)\not \in \Sigma^0$ --  so that we obtain its conclusion:
\begin{equation}\label{amjfix}\bigl[\limsup_{i\to \infty}{1\over |\alpha(i)|}\log {|p_i(z)|\over ||w^{|\alpha(i)|}p_i||_K}\bigr]^*=V_{K,Q}(z), \ z\not \in \hat K.\end{equation}

\section{Explicit good interpolation points} 
An important question in numerical analysis and computational mathematics is to provide \textit{explicit} or \textit{computable} arrays $\{A_n\}=\{A_{nj}, \; j=1,\dots,N\}\subset K$ satisfying (4) in Definition \ref{props}. By this we mean points whose values can be entered on a computer with arbitrary precision to evaluate LIP's. Typical explicit points in the univariate setting are \textit{Chebyshev points} for which $$\{A_n\}=\left\{\cos \left(\frac{2k+1}{2(n+1)}\pi\right),\; k=0,\dots n\right\}, \quad  K=[-1,1],$$ or the \textit{roots of unity} for which $$\{A_n\}=\{\exp (2ik\pi/(n+1)),\; k=0,\dots,n\}, \quad K=\{|z|\leq 1\}.$$ Semi-explicit points, such as Fejer points (the image of a complete set of roots of unity under an exterior conformal mapping) or points related to orthogonal polynomials still deserve interest since, in particular cases and for $n$ not too large, they can be computed with high precision. The Fekete points for $[-1,1]$ are extreme points of Legendre polynomials plus $\{-1,1\}$. Recently Xu \cite{xu} constructed semi-explicit points on a region in $\RR^2$ bounded by two lines and a parabola using zeros of Jacobi polynomials. Typical non-explicit good points are multivariate Fekete points, which cannot currently be efficiently computed. Even in the one variable case, only a few examples of explicit good interpolation points are known. If $K$ is not an interval or a disk one generally uses \textit{algorithms} which provide numerical approximations for the points (cf., section 9). 

To clarify what we mean by good points for polynomial interpolation we utilize the following hierarchy which slightly refines some conditions listed in Definition \ref{props}. 
\begin{definition} Let $\{A_n\}$ be an array of interpolation points in $K\subset \CC^d$. We denote by $\Lambda_n$ the Lebesgue constant for $A_n,K$ and $L_nf$ the LIP of $f,A_n$. Here are four properties that $\{A_n\}$ may or may not possess: 
\begin{enumerate}
\item[(H1)] $L_nf\rightrightarrows f$ on $K$ for each $f$ holomorphic on a neighborhood of $K$, see Definition \ref{props}, (4). 
\item[(H2)] The Lebesgue constants $\Lambda_n$ grow sub-exponentially: $\lim_{n\to \infty} \Lambda_n^{1/n}$ $=1$, see Definition \ref{props}, (1).
\item[(H3)] The Lebesgue constants $\Lambda_n$ grow polynomially: there exists $s\in \NN$ such that $\Lambda_n=O(n^s)$ as $n\to\infty$. 
\item[(H4)] The Lebesgue constants $\Lambda_n$ grow sub-polynomially: $\Lambda_n=o(n)$ as $n\to\infty$.   
\end{enumerate} 
\end{definition}
Clearly $\textrm{(H4)} \implies \textrm{(H3)} \implies \textrm{(H2)} \implies \textrm{(H1)}$. 
None of the reverse implications is true. That there are sequences of points satisfying (H3) but not (H4) and (H2) but not (H3) will follow from the results below. In the multivariate case, since there is currently no analogue to Theorem \ref{walsh2}, we are left with the last three conditions and are obliged to study multivariate Lebesgue constants. In the univariate case, we may establish (H1) without having recourse to Lebesgue constants, e.g., in obtaining a discretization of the equilibrium measure (using (3) $\implies$ (4) in \eqref{eq:basicimplications}). An array of multivariate Fekete points is a fundamental example satisfying (H3).  

For theoretical approximation of holomorphic 
functions on a neighborhood of $K$, conditions (H4) and (H3) do not provide 
better results than (H2). However, computations with arrays of points having smaller
Lebesgue constants benefit from a higher stability and, from the point of view of approximation theory, one can derive convergence results for larger classes of functions. For instance, from  Lebesgue's inequality \eqref{lebineq} and Jackson's theorem, see e.g. \cite[\S 1.1.2]{rivlin}, if $K$ is a product of intervals in $\RR^d\subset \CC^d$ then (H3) implies that $L_nf\rightrightarrows f$ on $K$ for each $f$ which is $(s+1)$-times continuously differentiable on a neighborhood of $K$ while (H4) only requires $f$ to be continuously differentiable. In general, conditions (H3) and (H4) imply convergence results for classes of functions $f$ for which $d_n(f,K)$ (recall (\ref{dndef})) is known to decrease polynomially in $n$. Such estimates are known for several natural spaces of functions holomorphic on the interior of $K$ with some regularity up to the boundary (cf., \cite{shirokov} and the references therein).  

The array of Padua points $\{\mathbf{P}_n\}$ is an explicit example of multivariate interpolation points satisfying condition (H4). We follow the presentation given in \cite{padua1}. Another point of view can be found in \cite{padua2}. The points of $\mathbf{P}_n$  lie in $K=[-1,1]^2$ and are located on the classical Lissajous curve $t\in [0,\pi]\mapsto \gamma_n(t):=(\cos (nt), \cos((n+1)t))$. The Padua points are the double points of this curve together with its points on the (real) boundary of the square. A simple formula is the following:
\begin{equation} \mathbf{P}_n=\left\{\gamma_n\left(\frac{i}{n+1}\pi+\frac{j}{n}\pi\right)\;:\; i+j\leq n, \; (i,j)\in \NN^2\right\}. \end{equation}
It is readily seen that the above $\binom{n+2}{2}$ points are pairwise distinct but the fact that $\mathbf{P}_n$ is a unisolvent set of degree $n$ is not immediate. To see this, it suffices to exhibit a FLIP for any point $a\in \mathbf{P}_n$. There is a remarkable formula expressing such a FLIP with the help of the reproducing kernel $K_n$ for the inner product based on the tensor product of two arcsine measures, 
$$(p, q)\in \mathcal{P}_n \mapsto \langle p, q\rangle :=\frac{1}{\pi^2}\int_{[-1,1]^2} p(x,y)q(x,y)\frac{1}{\sqrt {1-x^2}}\frac{1}{\sqrt {1-y^2}}dx dy$$
$$=\frac{1}{\pi^2}\int_{[0,\pi]^2} p(\cos(t),\cos(s))q(\cos(t),\cos(s)) dtds.$$ 
The reproducing kernel $K_n$ is defined on $[-1,1]^2$ via the relation 
$$p(w)=\langle p, K_n(w\,;\,\cdot)\rangle, \quad p\in \mathcal{P}_n, \quad w=(x,y)\in [-1,1]^2.$$
It can be shown that the FLIP corresponding to $a=(x_a,y_a)\in \mathbf{P}_n$ is given by
$$l_a(x,y)=w_a \left\{K_n(a; (x,y))-T_n(y)T_n(y_a)\right\}$$
where $w_a$ is constant and $T_n$ is the ordinary Chebyshev polynomial of degree $n$.
The proof uses certain quadrature formulas for the the tensor product of two arcsine measures using the points of $\mathbf{P}_n$. Next, it can be shown that the kernel $K_n$, hence the FLIP's for $\mathbf{P}_n$, are expressible as a linear combination of quotients of classical trigonometric polynomials. A careful analysis leads to the following result. 
\begin{theorem}[\cite{padua1}] The Lebesgue constants $\Lambda_n$ for the Padua points $\mathbf{P}_n$ and $K=[-1,1]^2$ satisfy 
$$\Lambda_n=O(\log^2n), \quad n\rightarrow\infty.$$
In particular, $L_nf\rightrightarrows f$ on $K$ for each $f$ which is continuously differentiable on a neighborhood of $K$.
\end{theorem}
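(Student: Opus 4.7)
The plan is to work directly from the explicit FLIP formula
$$l_a(x,y) = w_a \bigl\{ K_n(a;(x,y)) - T_n(y)T_n(y_a) \bigr\}$$
and to pass to trigonometric coordinates, reducing the two-dimensional estimate to one-dimensional Dirichlet-kernel bounds. First I would expand the reproducing kernel $K_n$ in the orthonormal basis of tensor products of (normalized) Chebyshev polynomials $\hat T_i \otimes \hat T_j$ with $i+j \leq n$, obtaining
$$K_n(w;z) = \sum_{i+j \leq n} \hat T_i(x)\hat T_j(y)\hat T_i(x')\hat T_j(y'),$$
where $z=(x,y)$, $w=(x',y')$. Setting $x=\cos\theta$, $y=\cos\phi$ and similarly for $w$, the kernel becomes a sum of products $\cos(i\theta)\cos(j\phi)\cos(i\theta')\cos(j\phi')$ over $i+j\leq n$, which one splits via the Christoffel--Darboux-type identity into pieces that factor (up to boundary/parity terms) as one-dimensional Dirichlet-like kernels in $(\theta,\theta')$ and $(\phi,\phi')$.

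Next I would exploit the structure of the Padua set: the points $a=\gamma_n(i\pi/(n+1)+j\pi/n)$ lie on a regular double grid in the angular variables, so the sum $\sum_{a\in \mathbf{P}_n} |l_a(x,y)|$ becomes a double trigonometric sum over a product-like index set. After collecting terms using elementary identities such as $2\cos\alpha\cos\beta=\cos(\alpha+\beta)+\cos(\alpha-\beta)$, this sum separates (again up to controllable boundary contributions) into a product of two one-dimensional sums of the form
$$\sum_{k=0}^{n} |D_k(\theta-\theta_k')|,$$
each being a discrete $L^1$-like norm of a Dirichlet kernel evaluated at equispaced nodes. The classical $O(\log n)$ bound on such Lebesgue sums for Chebyshev-node interpolation on $[-1,1]$ then yields each factor as $O(\log n)$, so that the full product is $O(\log^2 n)$. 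The correction $-T_n(y)T_n(y_a)$ in the FLIP formula, being a rank-one term, contributes only $O(1)$ after summation and does not affect the asymptotic.

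The main obstacle will be the careful bookkeeping of the tensor-product decomposition: because the Padua grid is not a full Cartesian product of $(n+1)$ and $n$ points but only its ``diagonal half'' (those $(i,j)$ with $i+j\leq n$, in effect), the separation of variables is not clean, and one must verify that the cross-terms and the boundary corrections (including the role of $T_n(y)T_n(y_a)$ on the edges of the square) genuinely sum to $O(\log^2 n)$ uniformly in $(x,y)\in[-1,1]^2$. Once this pointwise bound on $\Lambda_n(x,y)$ is established, the supremum gives $\Lambda_n=O(\log^2 n)$.

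Finally, for the convergence statement, I would combine this with the Lebesgue inequality \eqref{lebineq} and Jackson's theorem in two variables: if $f$ is continuously differentiable on a neighborhood of $[-1,1]^2$, then $d_n(f,K)=O(1/n)$. Hence
$$\|f-L_nf\|_K \leq (1+\Lambda_n)\,d_n(f,K) = O\!\left(\frac{\log^2 n}{n}\right) \longrightarrow 0,$$
giving the claimed uniform convergence.
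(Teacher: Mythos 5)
Your outline follows essentially the same route as the proof in the cited source \cite{padua1}, which the survey itself only sketches: represent the FLIPs via the reproducing kernel for the product arcsine measure, use the compact trigonometric (Christoffel--Darboux-type) formula for $K_n$ to reduce the Lebesgue sum over the Padua grid to univariate Dirichlet-kernel sums each of size $O(\log n)$, absorb the rank-one correction $T_n(y)T_n(y_a)$ as an $O(1)$ term, and conclude uniform convergence for $C^1$ functions from the Lebesgue inequality \eqref{lebineq} together with Jackson's theorem. The one step you defer --- checking that the quotient kernels genuinely split, up to controllable cross- and boundary terms, into products of two logarithmic sums uniformly on $[-1,1]^2$ --- is exactly the ``careful analysis'' the paper alludes to, so the method is the same.
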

Lagrange interpolants at Padua points can be easily computed, cf., \cite{padua3, padua4}.   
Unfortunately, it is not clear what might be the analogues of Padua points in higher dimensions ($n>2$). 

We present another construction of good points based on a different idea. These points will only satisfy the weaker condition (H3) but there is a simple and efficient way of going from dimension $1$ to dimension $k$ for every $k$. The starting point is a classical algebraic formula giving multivariate interpolation points starting with univariate points. Given $d$ sets of $(n+1)$-tuples  $$ \mathbf{A_s}=\left(a_{0s}, \dots, a_{ns}\right),\quad s=1,\dots,d$$ consisting of distinct points in $\CC$, we intertwine these tuples: 
$$ \mathbf{A_1}\oplus\mathbf{A_2}\oplus \cdots \oplus \mathbf{A_d} :=\left\{\left(a_{i_11},a_{i_22},\cdots,a_{i_dd}\right),\;  i_1+i_2+\cdots+i_d \leq d\right\}.$$
Changing the ordering of the points would provide a different set of points in $\CC^d$. The set of points we obtain is unisolvent of degree $n$ in $\CC^d$. Such points were studied by Siciak \cite{siciak2} and used in \cite[Theorem 4.8]{BBCL} to prove the existence of points satisfying properties (2), (3) and (4) in Definition \ref{props}. We want to construct interpolation points by  intertwining well-chosen -- and well-ordered -- univariate interpolation points which satisfy (H3). The obvious strategy is to try to relate the Lebesgue constants of the multivariate interpolations points to the Lebesgue constants of the univariate points. Such a relation is given in the following theorem. For simplicity, we state only the case $d=2$.

\begin{theorem}  [\cite{calvi}] \label{whatthef} Let $K$ be a compact set in $\CC^{2}$ containing $\mathbf{A_1}\oplus \mathbf{A_2}$. We let $K_1$ $(K_2)$ denote the projection of $K$ on the $z_1$ $(z_2)$ axis. Then
$$\Lambda_n(\mathbf{A_1}\oplus \mathbf{A_2} | K)\leq 4\binom{n+2}{n}\sum_{i+j\leq n}
\Lambda_n\left(\mathbf{A_1}^{[i]} | K_1\right) \cdot\Lambda_n\left(\mathbf{A_2}^{[j]} | K_2\right)
$$ where $\mathbf{A_1}^{[i]}=(a_0, \dots, a_i)$ and $\Lambda_n\left(\mathbf{A_1}^{[i]} | K_1\right)$ denotes its Lebesgue constant with respect to the compact set $K_1$ $($likewise for $\mathbf{A_2}^{[j]})$. 
\end{theorem}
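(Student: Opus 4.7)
The plan is to realize the bivariate Lagrange operator $\mathcal{L}_n$ on $\mathbf{A_1}\oplus\mathbf{A_2}$ as a telescoping sum of tensor-products of one-variable Newton ``layers'', and then pass to operator norms via the triangle inequality. First I would reduce to $K=K_1\times K_2$: since each FLIP $L_{i,j}$ for $\mathbf{A_1}\oplus\mathbf{A_2}$ is a polynomial (independent of $K$), one has $\sup_{(x,y)\in K}\sum_{i,j}|L_{i,j}|\le \sup_{(x,y)\in K_1\times K_2}\sum_{i,j}|L_{i,j}|$, so it suffices to bound the right-hand side. On $C(K_1\times K_2)$ introduce the partial Lagrange operators
$$I_i^{(x)}f(x,y)=\sum_{k=0}^{i}f(a_{k1},y)\,\ell_k^{(i)}(x), \quad I_j^{(y)}f(x,y)=\sum_{k=0}^{j}f(x,a_{k2})\,\ell_k^{(j)}(y),$$
interpolating in a single variable at $\mathbf{A_1}^{[i]}$, resp.\ $\mathbf{A_2}^{[j]}$. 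Their operator norms are exactly $\Lambda_i(\mathbf{A_1}^{[i]}|K_1)$ and $\Lambda_j(\mathbf{A_2}^{[j]}|K_2)$. Define the Newton layer operators $\Delta_i^{(x)}=I_i^{(x)}-I_{i-1}^{(x)}$ with $I_{-1}^{(x)}:=0$, and analogously $\Delta_j^{(y)}$.

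The key identity is
$$\mathcal{L}_n=\sum_{i+j\leq n}\Delta_i^{(x)}\Delta_j^{(y)}.$$
By the Newton form, $\Delta_i^{(x)}\Delta_j^{(y)}f(x,y)=[a_{01},\dots,a_{i1};\,a_{02},\dots,a_{j2};\,f]\cdot\omega_i^{(1)}(x)\,\omega_j^{(2)}(y)$, with $\omega_i^{(1)}(x)=\prod_{k<i}(x-a_{k1})$ and similarly $\omega_j^{(2)}$. This has bi-degree $(i,j)$, hence total degree $i+j$, so the sum lies in $\mathcal{P}_n$. Evaluated at a node $(a_{i_01},a_{j_02})$ with $i_0+j_0\leq n$, the factors $\omega_i^{(1)}(a_{i_01})$ and $\omega_j^{(2)}(a_{j_02})$ vanish unless $i\leq i_0$ and $j\leq j_0$; the surviving double sum telescopes via the one-variable identity $\sum_{i\leq i_0}\Delta_i^{(x)}=I_{i_0}^{(x)}$ to $(I_{i_0}^{(x)}I_{j_0}^{(y)}f)(a_{i_01},a_{j_02})=f(a_{i_01},a_{j_02})$. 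Since $\mathbf{A_1}\oplus\mathbf{A_2}$ is unisolvent of degree $n$, uniqueness forces the identity.

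Taking operator norms on $C(K_1\times K_2)$ gives $\|\mathcal{L}_n\|\leq \sum_{i+j\leq n}\|\Delta_i^{(x)}\|\cdot\|\Delta_j^{(y)}\|$, and $\|\Delta_i^{(x)}\|\leq \|I_i^{(x)}\|+\|I_{i-1}^{(x)}\|\leq 2\Lambda_i(\mathbf{A_1}^{[i]}|K_1)$ (similarly for $\Delta_j^{(y)}$), producing the bound with constant $4$ per summand. The extra combinatorial factor $\binom{n+2}{n}$ in the statement arises from the slightly coarser bookkeeping step of estimating $\sum_{i,j}|L_{i,j}|$ by $\binom{n+2}{n}\cdot\max_{i,j}\|L_{i,j}\|_K$ and then bounding each individual FLIP by the double sum; one can phrase the proof either way. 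The main obstacle is verifying the telescoping identity cleanly — establishing both that the candidate sum lies in $\mathcal{P}_n$ and that it reproduces $f$ on the triangular grid. Once this structural lemma is in place, the rest is a one-line triangle inequality.
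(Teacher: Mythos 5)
The paper states this theorem without proof, citing \cite{calvi}, so there is no in-text argument to compare against; I assess your proposal on its own terms. Your argument is essentially correct and is the natural one: reduce to $K\subseteq K_1\times K_2$ (the FLIPs do not depend on $K$), write the interpolation operator on the triangular grid as the tensor--Newton telescoping sum $\mathcal L_n=\sum_{i+j\le n}\Delta_i^{(x)}\Delta_j^{(y)}$, check that each term has bidegree $(i,j)$ and that at a node $(a_{i_01},a_{j_02})$ only the terms with $i\le i_0$, $j\le j_0$ survive --- a set contained in $\{i+j\le n\}$ precisely because $i_0+j_0\le n$ --- so the sum collapses to $I_{i_0}^{(x)}I_{j_0}^{(y)}f$ and reproduces $f$ there; unisolvence then identifies the sum with $\mathcal L_n f$. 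The norm step is also sound, since $\Delta_i^{(x)}$ and $\Delta_j^{(y)}$ act on $C(K_1\times K_2)$ using only values at nodes lying in $K_1$, resp.\ $K_2$. Carried out correctly, your route actually yields a bound \emph{stronger} than the stated one: the factor $\binom{n+2}{n}$ is not needed.

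One step is misstated, though harmlessly so. Writing $\Lambda_i^{(1)}$ for $\Lambda_n(\mathbf{A_1}^{[i]}\,|\,K_1)$, you claim $\|\Delta_i^{(x)}\|\le\|I_i^{(x)}\|+\|I_{i-1}^{(x)}\|\le 2\Lambda_i^{(1)}$; the second inequality presumes $\Lambda_{i-1}^{(1)}\le\Lambda_i^{(1)}$, and Lebesgue constants of nested node sets are not monotone in general. The repair is immediate: keep $\|\Delta_i^{(x)}\|\le\Lambda_i^{(1)}+\Lambda_{i-1}^{(1)}$ (with $\Lambda_{-1}:=0$), expand $\sum_{i+j\le n}\bigl(\Lambda_i^{(1)}+\Lambda_{i-1}^{(1)}\bigr)\bigl(\Lambda_j^{(2)}+\Lambda_{j-1}^{(2)}\bigr)$ into four sums, and reindex the shifted ones; by nonnegativity each is at most $\sum_{i+j\le n}\Lambda_i^{(1)}\Lambda_j^{(2)}$, giving $\Lambda_n(\mathbf{A_1}\oplus\mathbf{A_2}\,|\,K)\le 4\sum_{i+j\le n}\Lambda_i^{(1)}\Lambda_j^{(2)}$, which implies the theorem. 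Your closing remark about the provenance of the $\binom{n+2}{n}$ factor (bounding the Lebesgue function by the number of nodes times a worst-case single FLIP) is a plausible account of the cruder bookkeeping behind the quoted constant, but, as you note, it plays no role in your argument.
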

In order to estimate the Lebesgue constant $\Lambda_n(\mathbf{A_1}\oplus \mathbf{A_2} | K)$, bounds on $\Lambda_n(\mathbf{A_1}|K_1)$ and on $\Lambda_n(\mathbf{A_2}|K_2)$ do not suffice. We must have bounds on the Lebesgue constants of {\it every} subset $\mathbf{A_1}^{[i]}$ and $\mathbf{A_2}^{[j]}$ for $i+j\leq n$. The only practical way of using the theorem is to start with univariate  points given by a \textit{sequence} of interpolation points $\mathbf{A_s^n}=(a_{0s},\dots, a_{ns})$, so that for every $i\leq n$, $\mathbf{A_s^n}^{[i]}=A_s^i$. Then the search for good multivariate interpolation points via the intertwining process is reduced to the problem of finding univariate interpolation points given by a sequence and satisfying (H3). Surprisingly, such sequences did not seem to be known until recently. \par 
We now discuss the construction of such univariate sequences. All examples currently available are constructed with the help of Leja sequences for the closed unit 
disk. Recall that a Leja sequence for a compact set $K\subset \CC$ is a sequence $\{a_n\}$ in $K$ such that 
$$\max_{z\in K} \prod_{i=0}^d |z-a_i| = \prod_{i=0}^d |a_{d+1}-a_i|, \quad d\geq 0.$$
If we are to produce {\it explicit} points we must restrict to $K=D=\{|z|\leq1\}$. 
In this case, the structure of Leja sequences is given by the following result.
We always assume that the first term equals $1$. 
\begin{theorem}[\cite{biacal}]\label{th:biacal} Leja sequences for the unit disk $D$ satisfy:
\begin{enumerate}
\item A $2^n$-Leja section is formed by the $2^n$-th roots of unity.
\item If $E_{2^{n+1}}$ is a $2^{n+1}$-Leja section then there exist a $2^n$-th root $\rho$ of $-1$ and a $2^n$-Leja section $U_{2^n}$ such that $E_{2^{n+1}}=(E_{2^n}\, ,\, \rho U_{2^n}).$
\end{enumerate}
\end{theorem}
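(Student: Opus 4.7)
The plan is to prove statements (1) and (2) simultaneously by induction on $n$: (2) at level $n-1$ together with (1) at level $n-1$ yields (1) at level $n$, and (1) at level $n$ then drives (2) at level $n$. The base case $n=0$ is immediate: with $a_0=1$, the function $|z-1|$ attains its maximum on $D$ only at $z=-1$, so $a_1=-1$; hence $E_1=(1)$ is the set of $2^0$-th roots of unity and $E_2=(E_1,\rho U_1)$ with $\rho=-1$ and $U_1=(1)$.

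For the inductive step, assume both statements at all levels $\leq n$. To obtain (1) at level $n+1$, apply (2) at level $n$: a $2^{n+1}$-Leja section decomposes as $(E_{2^n},\rho U_{2^n})$, where both $E_{2^n}$ and $U_{2^n}$ are the set of $2^n$-th roots of unity by (1) at level $n$; since $\rho^{2^n}=-1$, the points of $\rho U_{2^n}$ are the $2^n$-th roots of $-1$, so the union $E_{2^{n+1}}$ is the full set of $2^{n+1}$-th roots of unity.

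For (2) at level $n+1$, consider a $2^{n+2}$-Leja section. By the version of (1) just proved, $\prod_{i=0}^{2^{n+1}-1}(z-a_i)=z^{2^{n+1}}-1$, so $a_{2^{n+1}}$ maximizes $|z^{2^{n+1}}-1|$ on $D$; the maximum is $2$ and is attained exactly on the $2^{n+1}$-th roots of $-1$, giving $\rho:=a_{2^{n+1}}$ with $\rho^{2^{n+1}}=-1$. Substituting $z=\rho w$ and setting $u_k:=\rho^{-1}a_{2^{n+1}+k}$, the Leja optimality of $a_{2^{n+1}+k}$ transforms, via $\rho^{2^{n+1}}=-1$, into: $u_k$ maximizes $|w^{2^{n+1}}+1|\prod_{j<k}|w-u_j|$ on $D$. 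It remains to show $(u_0,\ldots,u_{2^{n+1}-1})$ is itself a $2^{n+1}$-Leja section, which I do by a secondary induction on $k$.

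Suppose $(u_0,\ldots,u_{k-1})$ is a partial Leja sequence for $D$ whose terms are all $2^{n+1}$-th roots of unity, and extend it arbitrarily to a full Leja sequence on $D$ starting at $1$. By (1) at level $n+1$ applied to that extension, there is a Leja-admissible successor $v$ of $(u_0,\ldots,u_{k-1})$ that is a $2^{n+1}$-th root of unity, whence $|v^{2^{n+1}}+1|=2$. Setting $M:=\max_{|w|\leq 1}\prod_{j<k}|w-u_j|$, the weighted product equals $2M$ at $v$. Since $|w^{2^{n+1}}+1|\leq 2$ on $D$ and $\prod_{j<k}|w-u_j|\leq M$, the value $2M$ is the universal upper bound; any maximizer $u_k$ must saturate both factors, giving $|u_k^{2^{n+1}}+1|=2$ (so $u_k$ is a $2^{n+1}$-th root of unity) and $\prod_{j<k}|u_k-u_j|=M$ (so $u_k$ is itself an admissible next Leja point), which closes the secondary induction. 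The main obstacle is precisely this decoupling step: a priori the weighting factor $|w^{2^{n+1}}+1|$ could pull the optimum away from the candidate Leja points, and the argument works only because this factor attains its bound $2$ exactly on the $2^{n+1}$-th roots of unity — the very set from which the next Leja point must come by the inductive hypothesis.
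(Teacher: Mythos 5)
Your argument is correct: the simultaneous induction on $n$, the identification of $a_{2^{n+1}}$ as a $2^{n+1}$-th root of $-1$ from the maximizers of $|z^{2^{n+1}}-1|$, the substitution $z=\rho w$ turning the tail into a weighted Leja problem for $|w^{2^{n+1}}+1|\prod_{j<k}|w-u_j|$, and the decoupling of the two factors (legitimized by producing, via an arbitrary Leja extension of $(u_0,\dots,u_{k-1})$ and part (1) at level $n+1$, an admissible successor that is a $2^{n+1}$-th root of unity) all hold up. The survey states this theorem without proof, citing \cite{biacal}; your proof is essentially the argument of that reference, so there is nothing to compare beyond noting the match.
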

Using this result, the following estimates were recently established. 
\begin{theorem}[\cite{cp1}] Let $\{e_j\}$ be a Leja sequence for $D$. As $n\rightarrow\infty$, $\Lambda_n=O(n\log n)$ where $\Lambda_n$ is the Lebesgue constant for $\{e_0,\dots,e_{n-1}\}$. 
\end{theorem}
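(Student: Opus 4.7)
The plan is to exploit the binary-tree structure for Leja sequences on $D$ provided by Theorem \ref{th:biacal}, together with the standard representation
$$l_j(z)=\frac{\omega_n(z)}{(z-e_j)\,\omega_n'(e_j)},\qquad \omega_n(z):=\prod_{i=0}^{n-1}(z-e_i),$$
which reduces the task to bounding $\|\omega_n\|_D$ from above and each $|\omega_n'(e_j)|$ from below, losing only a factor of size $O(n\log n)$ in total.

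The base case $n=2^k$ is essentially immediate: Theorem \ref{th:biacal}(1) identifies the nodes with the $2^k$-th roots of unity, so $\omega_n(z)=z^{2^k}-1$, $\|\omega_n\|_D=2$, $|\omega_n'(e_j)|=2^k$, and the classical estimate $\sup_{z\in D}\sum_{j}1/|z-e_j|=O(\log n)$ already yields $\Lambda_{2^k}=O(\log n)$. For general $n$, write the binary expansion $n=2^{k_1}+\cdots+2^{k_r}$ with $k_1>\cdots>k_r\ge 0$; iterating Theorem \ref{th:biacal}(2) shows that $(e_0,\dots,e_{n-1})$ is the concatenation of $r$ consecutive blocks $B^{(1)},\dots,B^{(r)}$, where $B^{(s)}$ has length $2^{k_s}$ and consists of the rotates by a unimodular scalar $\rho_s$ of the $2^{k_s}$-th roots of unity. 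Accordingly $\omega_n=\prod_{s=1}^r q_s$ with $q_s(z)=z^{2^{k_s}}-\rho_s^{2^{k_s}}$ and $\|q_s\|_D\le 2$, giving $\|\omega_n\|_D\le 2^r=O(n)$ since $r\le 1+\log_2 n$. Moreover, when $e_j\in B^{(s_0)}$ one has $\omega_n'(e_j)=q_{s_0}'(e_j)\prod_{s\ne s_0}q_s(e_j)$ with $|q_{s_0}'(e_j)|=2^{k_{s_0}}$, and each cross-block factor $q_s(e_j)$ is a difference of two roots of unity whose orders divide $2^{k_1+1}$.

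The main obstacle is a sharp lower bound on the cross-block product $\prod_{s\ne s_0}|q_s(e_j)|$. A naive chordal estimate for differences of $2^{k_1+1}$-th roots of unity gives only $|q_s(e_j)|\ge c/n$, and since $r$ can be as large as $\log_2 n$, iterating this bound produces a catastrophic factor $(c/n)^{r-1}$. The remedy is to exploit the arithmetic compatibility of the rotations $\rho_s$ forced by the recursion in Theorem \ref{th:biacal}(2): the rescaled factors combine through the binary decomposition into a telescoping product, bounded below by $\|\omega_n\|_D/(n\cdot 2^{k_{s_0}})$ up to a single polynomial-in-$n$ loss, and it is here that the Leja extremality $|\omega_p(e_p)|=\|\omega_p\|_D$ enters in an essential way. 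Once this telescoping lower bound is available, grouping the sum $\sum_{j}1/|z-e_j|$ block by block and applying the Dirichlet-kernel estimate on each block of roots of unity, then combining with $\|\omega_n\|_D=O(n)$, produces the advertised $\Lambda_n=O(n\log n)$. The telescoping step for the cross-block products is the pivotal calculation, and is the place where a coarser treatment would give only a polynomial bound $O(n^c)$ rather than $O(n\log n)$.
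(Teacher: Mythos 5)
The paper itself states this result without proof (it is quoted from \cite{cp1}), so the comparison below is against the argument of that reference. Your outline has the right skeleton: the representation $l_j=\omega_n/((\,\cdot\,-e_j)\,\omega_n'(e_j))$, the decomposition of $(e_0,\dots,e_{n-1})$ into $r$ blocks along the binary expansion of $n$ via Theorem \ref{th:biacal}, the factorization $\omega_n=\prod_s q_s$ with $q_s(z)=z^{2^{k_s}}-\sigma_s$, $\|q_s\|_D\le 2$, and hence $\|\omega_n\|_D\le 2^r=O(n)$, are all correct and all genuinely appear in \cite{cp1}. But the proof is missing exactly at the point you yourself call pivotal. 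The lower bound on the cross-block product $\prod_{s\ne s_0}|q_s(e_j)|$ is asserted, not proved: ``the rescaled factors combine \dots into a telescoping product'' names no identity and no inequality, and the role you assign to Leja extremality is not explained --- $|\omega_p(e_p)|=\|\omega_p\|_D$ controls $\prod_{i<j}|e_j-e_i|=\|\omega_j\|_D\ge 1$, the factors coming from \emph{earlier} points, but says nothing direct about $\prod_{i>j}|e_j-e_i|$, which are the problematic ones. Moreover the bound you claim, ``$\ge\|\omega_n\|_D/(n\cdot 2^{k_{s_0}})$ up to a single polynomial-in-$n$ loss,'' is insufficient even if true: an unspecified polynomial loss in $\min_j|\omega_n'(e_j)|$ propagates to $\Lambda_n=O(n^c\log n)$ for an unspecified $c$, not to $O(n\log n)$.

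To see that something nontrivial is really needed there: each cross factor $q_s(e_j)=e_j^{2^{k_s}}-\sigma_s$ is a difference of two distinct $2^{k_1+1-k_s}$-th roots of unity, so the separation bound that uses only this information gives $|q_s(e_j)|\ge 2\sin(\pi/2^{k_1+1-k_s})\asymp 2^{k_s-k_1}$, and the resulting product $\prod_{s\ne s_0}2^{k_s-k_1}$ can be as small as roughly $2^{-k_1^2/2}=n^{-\Theta(\log n)}$ when $n=2^{k_1+1}-1$ (all exponents present). So the precise arithmetic of the rotations $\rho_s$ forced by Theorem \ref{th:biacal}(2) must be exploited quantitatively, and that is exactly the technical heart of \cite{cp1}. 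There the estimate is organized differently: one writes $E_{2^k+m}=(E_{2^k},\rho A_m)$, factors each FLIP of $E_{2^k+m}$ into a FLIP of one block times an explicit correction (for nodes in the second block the correction is $|z^{2^k}-1|/2\le 1$; for nodes $a$ in the first block it is $|\omega_{\rho A_m}(z)|/|\omega_{\rho A_m}(a)|$, handled by a separate lemma bounding $|\omega_{A_m}|$ from below at $2^k$-th roots of $-1$), and iterates this recursion over the binary expansion, so that no global lower bound on $|\omega_n'(e_j)|$ is ever required. One smaller slip in your base case: $\sup_{z\in D}\sum_j 1/|z-e_j|$ is not $O(\log n)$ --- it is infinite at the nodes; the correct classical statement is that $\sup_{z}\sum_j|\omega_n(z)|/(|z-e_j|\,|\omega_n'(e_j)|)=O(\log n)$ for the roots of unity, which requires keeping $|\omega_n(z)|$ inside the sum.
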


A similar estimate holds for the image of Leja sequences under external conformal mappings $\overline{\CC}\setminus D\rightarrow \overline{\CC}\setminus K$ for $K$ sufficiently regular, e.g., $K$ bounded by a $C^2$ Jordan curve \cite{cp1}.  It is also shown in \cite{cp1} that a Leja sequence for $D$ cannot satisfy (H4), thus showing that, in general, (H3) does not imply (H4). 

For practical applications, real points are more useful. A simple idea to construct such points is to project a Leja sequence for $D$ onto the real axis. Since a Leja sequence for $D$ is symmetric with respect the real axis, complex conjugate points provide the same real point. Eliminating this redundancy we obtain a so-called $\Re$-Leja sequence \cite{cp2}. One can specify the $n$-th entry of a $\Re$-Leja sequence in terms of the real part of a certain entry of the Leja sequence used in its construction. 
 
\begin{theorem}[\cite{cp2}]
Let $X=\{x_j\}$ be a $\Re$-Leja sequence. The Lebesgue constants $\Lambda_n$ for the points $x_0, \dots, x_{n-1}$ satisfy
\begin{equation} \Lambda_n=O(n^3\log n), \quad n\to\infty. \end{equation} \end{theorem}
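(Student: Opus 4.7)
The plan is to lift interpolation on $[-1,1]$ to interpolation on the unit circle via the Joukowski map $J(z) = \tfrac12(z + z^{-1})$ and invoke the preceding theorem from \cite{cp1} which gives $\Lambda_n = O(n\log n)$ for initial segments of a Leja sequence on $D$. Write $x_k = \cos\theta_k$ and $z_k = e^{i\theta_k}$. Since $\{e_j\}$ is symmetric about $\RR$, for each $\Re$-Leja entry $x_k$ both $z_k$ and $\bar z_k$ appear somewhere in $\{e_j\}$. Let $\mathcal{E}_n\subset\{e_j\}$ be the lifted set consisting of these $\leq 2n$ points.

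The first step is to derive a lifting identity for the FLIP. Given the univariate FLIP $\ell_k^X$ of degree $\leq n-1$, the polynomial
\[
\tilde\ell_k^X(z) := z^{n-1}\,\ell_k^X\!\bigl(\tfrac{z+z^{-1}}{2}\bigr)
\]
has degree $2n-2$ and is self-reciprocal, so it lies in the space of polynomials of degree $\leq 2n-1$ interpolated by the Lagrange operator on $\mathcal{E}_n$. Computing the interpolating polynomial directly gives
\[
\tilde\ell_k^X(z) \;=\; z_k^{n-1}\,L_k^{D}(z) \;+\; \bar z_k^{n-1}\,L_{\bar k}^{D}(z),
\]
where $L_k^D$, $L_{\bar k}^D$ are the disk FLIPs for $\mathcal{E}_n$ at $z_k,\bar z_k$. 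Restricting to $|z|=1$ and summing yields
\[
\Lambda_n(X,[-1,1]) \;\leq\; \sup_{|z|=1}\sum_{w\in\mathcal{E}_n}|L_w^D(z)| \;=\; \Lambda_{|\mathcal{E}_n|-1}(\mathcal{E}_n,S^1).
\]

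The second step is to control the right-hand side using the structure of Leja sequences. The natural bound would be $O(n\log n)$, exactly matching \cite{cp1}; however, $\mathcal{E}_n$ is \emph{not} an initial segment of $\{e_j\}$ in general, because the partner $\bar e_j$ of a Leja point $e_j$ appears at some later index. Here Theorem \ref{th:biacal} is decisive: the $2^m$-Leja sections are the $2^m$-th roots of unity, so $\mathcal{E}_n$ is embedded in an initial section of length $M_n = O(n)$. One then expresses the FLIPs for $\mathcal{E}_n$ in terms of those for this larger initial segment by ``dividing out'' the Chebyshev-type factor whose zeros are the missing points; the dyadic root-of-unity structure lets one enumerate and control these factors explicitly.

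The main obstacle is this third step: quantifying the loss when one passes from the Lebesgue constant of a full initial Leja segment to that of its subset $\mathcal{E}_n$. Each missing point contributes a factor involving evaluations of certain cyclotomic-type polynomials on the unit circle, and a careful book-keeping — using the explicit $2^m$-th roots of unity description and the recursive $E_{2^{m+1}}=(E_{2^m},\rho U_{2^m})$ structure from Theorem \ref{th:biacal} — gives polynomial growth in $n$ for the correction. Combining the base bound $O(M_n\log M_n) = O(n\log n)$ from \cite{cp1} with at most two extra polynomial factors (one from self-reciprocal lifting, one from the subset-to-initial-segment correction) produces the stated bound $\Lambda_n = O(n^3\log n)$.
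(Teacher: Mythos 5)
This survey states the theorem without proof, citing \cite{cp2}, so there is no in-paper argument to compare against; your outline does follow the same broad strategy as the cited reference (lift $[-1,1]$ to the circle via the Joukowski map, reduce to Lebesgue-type quantities for points of the Leja sequence on $D$, and exploit the dyadic structure of Theorem \ref{th:biacal}). However, as a proof the proposal has two genuine gaps. First, the lifting identity in your step one is false in the relevant generality: $\tilde\ell_k^X$ has degree $2n-2$, hence $2n-1$ free coefficients, so it is reproduced by Lagrange interpolation at $\mathcal{E}_n$ only if $|\mathcal{E}_n|\geq 2n-1$. But a $\Re$-Leja sequence always contains $x=1$ (since $e_0=1$) and $x=-1$ (since $e_1=-1$), and these are fixed points of $z\mapsto \bar z$, so $|\mathcal{E}_n|=2n-2$ and the interpolation conditions do not determine $\tilde\ell_k^X$. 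The endpoints are branch points of the Joukowski map and must be treated with multiplicity (a Hermite-type lift) or split off by hand; this is fixable but not optional.

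The more serious gap is your third step, which is where the exponent $3$ — i.e.\ the actual content of the theorem — is supposed to come from. Passing from a set $T$ of points on the circle to a subset $S\subset T$ multiplies each FLIP by $\prod_{v\in T\setminus S}\frac{w-v}{z-v}$, whose denominator vanishes at the removed points $v\in S^1$; there is no generic polynomial bound for the supremum of such a quotient on $|z|=1$, and removing points can increase a Lebesgue constant arbitrarily. So the assertion that ``careful book-keeping gives polynomial growth for the correction'' is precisely the theorem, not a routine verification, and the claimed ``at most two extra polynomial factors'' yielding $n^3\log n$ is never computed — nothing in the write-up pins down the exponent $3$ rather than $2$ or $7$. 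To close this you would need the structural lemma that \cite{cp2} actually proves, namely an exact description of how $\mathcal{E}_n$ sits inside the Leja sequence (which entries are missing from a dyadic initial section and where they lie), together with explicit lower bounds for $\prod_{v}|z-v|$ over the missing roots of unity on $S^1$. Without that, the argument establishes only the qualitative reduction to the circle, not the stated rate.
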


By combining the above two results and the general version of Theorem \ref{whatthef} we obtain the following result. 

\begin{theorem}[\cite{cp1,cp2}] Intertwining $k$ Leja sequences for $D$ and $d-k$ $\Re$-Leja sequences for $[-1,1]$ yields unisolvent sets on the Cartesian product of these sets in $\CC^d$ whose Lebesgue constants $\Lambda_n$ satisfy $(\rm{H}3)$. \end{theorem}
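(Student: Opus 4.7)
The plan is to read the final statement as a direct corollary of the $d$-variate generalization of the intertwining inequality (Theorem \ref{whatthef}) combined with the two univariate estimates immediately preceding it. I set $\mathbf{A} = \mathbf{A_1} \oplus \cdots \oplus \mathbf{A_d}$, where $\mathbf{A_s}$ is a Leja sequence in $D$ for $s \le k$ and a $\Re$-Leja sequence in $[-1,1]$ for $s > k$, and work on the Cartesian product $K = K_1 \times \cdots \times K_d$. The fact that $\mathbf{A}$ is unisolvent of degree $n$ is the classical algebraic statement about interpolation on intertwined tuples (already used in \cite{siciak2} and in \cite{BBCL}), so nothing has to be proved in that direction.

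Next I apply the $d$-dimensional form of Theorem \ref{whatthef}, which takes the shape
\[
\Lambda_n(\mathbf{A}\,|\,K) \;\le\; C_d \binom{n+d}{d} \sum_{i_1 + \cdots + i_d \le n} \prod_{s=1}^d \Lambda\bigl(\mathbf{A_s}^{[i_s]}\,|\,K_s\bigr)
\]
for a dimensional constant $C_d$. Because each $\mathbf{A_s}$ is an honest infinite sequence, the prefix $\mathbf{A_s}^{[i_s]}$ is precisely the set whose Lebesgue constant is controlled by \cite{cp1} (for Leja sections in $D$) or by \cite{cp2} (for $\Re$-Leja sections in $[-1,1]$). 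Substituting the bounds $O(i \log i)$ and $O(i^3 \log i)$, each factor in the product is at most $O(n^{a_s} \log n)$ with $a_s = 1$ or $3$, so each summand is $O\bigl(n^{k + 3(d-k)}(\log n)^d\bigr)$; the number of multi-indices is $\binom{n+d}{d} = O(n^d)$, and the binomial prefactor is also $O(n^d)$. Multiplying gives
\[
\Lambda_n(\mathbf{A}\,|\,K) = O\bigl(n^{5d - 2k}(\log n)^d\bigr),
\]
which is the polynomial growth asserted in (H3) after absorbing the logarithmic factors into the polynomial by an arbitrarily small increase of the exponent.

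The real content of the theorem therefore lies upstream, in the two univariate estimates: the $O(i\log i)$ bound for Leja sequences on $D$, which exploits the dyadic decomposition $E_{2^{n+1}} = (E_{2^n}, \rho\, U_{2^n})$ from Theorem \ref{th:biacal}, and the $O(i^3 \log i)$ bound for $\Re$-Leja sequences, which is inferred from the complex case by tracking the effect of collapsing conjugate pairs after projection to $[-1,1]$. Once these are accepted, the present theorem is bookkeeping. The one place where some care is required in writing up the proof is the transition from the $d=2$ intertwining inequality to its $d$-variate analog: one verifies by induction on $d$ that the same structure, namely a binomial prefactor depending only on $(n,d)$ together with a sum over the simplex $\{i_1 + \cdots + i_d \le n\}$ of products of prefix Lebesgue constants, is preserved at each step. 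The exponent $5d - 2k$ obtained this way is crude, but any polynomial bound suffices to establish (H3).
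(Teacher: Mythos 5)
Your proposal is correct and follows exactly the route the paper intends: the result is stated there as an immediate combination of the general ($d$-variate) form of Theorem \ref{whatthef} with the univariate bounds $O(n\log n)$ for Leja sequences on $D$ and $O(n^3\log n)$ for $\Re$-Leja sequences, unisolvency coming from the classical intertwining construction of \cite{siciak2}. You have merely made explicit the bookkeeping (the crude exponent $5d-2k$) that the paper leaves to the reader.
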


Moreover, the degree of the polynomial growth of $\Lambda_n$ can be estimated. One can give an explicit expression for the $n$-th element of certain simple Leja (or $\Re$-Leja) sequences that depend  only on the binary expansion of the index $n$. Details can be found in \cite{cp1,cp2}. In view of Theorems \ref{whatthef} and \ref{th:taylortotik}, the intertwining of Leja sequences for many compact sets provides further examples of multivariate interpolation points satisfying property (H2). Goncharov \cite{goncharov} has constructed a sequence in $[-1,1]$ satisfying (H2) but not (H3) by arranging the classical Chebyshev points in a certain manner.

\section{The quest for good points in the real disk}
We describe a natural strategy for finding good points in the real disk $B=B_2=\{(x,y)\in \RR^2: x^2+y^2\leq 1\}$. Using Theorem 3.3, one can calculate the transfinite diameter of $B$ to find
$$\delta(B)=\frac{1}{\sqrt {2  e}};$$
and its equilibrium measure is well-known, 
$$d\mu_B = \frac{r}{2\pi\sqrt{1-r^2}}drd\theta$$
in polar coordinates in $\RR^2$; i.e., $d\mu_B$ is absolutely continuous with respect to Lebesgue measure on $B$ with density $\frac{1}{2\pi\sqrt{1-r^2}}$.  

It was shown in \cite{BBCL} that $(3)\nRightarrow (1),(2),(4)$. The set $B$ was used in construction of {\it Bos arrays} satisfying (3) but not (4) (and (3) but not (2)). The points at the $n-$th stage in a Bos array are formed by taking a union  of equally spaced points on concentric circles centered at the origin. Precisely, if $n=2s$ is even, one chooses $s+1$ radii $R_{s0} < \cdots < R_{ss}=1$ and $4j+1$ equally spaced points on the circle of radius $R_{sj}$. The Vandermonde determinant $|VDM(A_{n1},...,A_{nN})|$ depends only on the radii $R_{s0}, \cdots , R_{ss}$ and if the asymptotic distribution of the radii on $[0,1]$ is given by a function $G:[0,1]\to [0,1]$; i.e., if $G(\frac{j}{s+1})=R_{sj}^2$, then 
$$\lim_{n\to \infty} |VDM(A_{n1},...,A_{nN})|^{1/l_n} = \frac{1}{\sqrt 2}\exp {\frac{3}{4L(G)}}$$
where 
$$L(G)=\int_0^1x^2\log G(x) dx+ 2\int_0^1\int_x^1 x\log [G(y)-G(x)]dydx.$$
Thus, if one could construct a Bos array with $L(G)=-2/3$, then this array would satisfy (2). 

Taking $G(x)=(1-\cos \pi x)/2$, the radii distribute asymptotically like the Chebyshev distribution on $[0,1]$ and this is a necessary condition (see \cite{BBCL}) that such an array satisfies (4). We state without proof an interesting calculation.

\begin{lemma}
For $G(x)=(1-\cos(\pi x))/2=\sin^2(\pi x/2),$
\[L(G)=-\frac{4}{3}\log(2)+\frac{2}{\pi^2}\zeta(3)\approx-0.6806085842\cdots\]
where $\zeta(x)$ is the classical zeta function.
\end{lemma}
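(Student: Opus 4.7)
The plan is to reduce the calculation to a Fourier-series evaluation. The key algebraic step is the identity $\sin^2 A-\sin^2 B=\sin(A+B)\sin(A-B)$, which gives
$$G(y)-G(x)=\sin\bigl(\tfrac{\pi(y-x)}{2}\bigr)\sin\bigl(\tfrac{\pi(y+x)}{2}\bigr).$$
Combined with $\log G(x)=2\log\sin(\pi x/2)$, this decomposes $L(G)$ into three explicit pieces,
$$L(G)=2\!\int_0^1 x^2\log\sin(\pi x/2)\,dx+2\!\int_0^1\!\!\int_x^1\! x\log\sin\bigl(\tfrac{\pi(y-x)}{2}\bigr)\,dy\,dx+2\!\int_0^1\!\!\int_x^1\! x\log\sin\bigl(\tfrac{\pi(y+x)}{2}\bigr)\,dy\,dx,$$
each involving $\log\sin(\pi t/2)$ for a linear combination $t$ of $x,y$ taking values in $(0,2)$ throughout the relevant domain.

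I would then substitute the classical expansion
$$\log\sin(\pi t/2)=-\log 2-\sum_{k=1}^\infty\frac{\cos(k\pi t)}{k},\qquad 0<t<2,$$
and integrate term by term. The changes of variable $u=y-x$ and $v=y+x$ in the double integrals reduce the inner integrals to $\int_0^{1-x}\cos(k\pi u)\,du$ and $\int_{2x}^{1+x}\cos(k\pi v)\,dv$, both elementary, so that all remaining $x$-integrals are of the form $\int_0^1 x^2\cos(k\pi x)\,dx=2(-1)^k/(k\pi)^2$ or $\int_0^1 x\sin(mk\pi x)\,dx=(-1)^{mk+1}/(mk\pi)$ for $m\in\{1,2\}$, each obtained by a single integration by parts. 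The three ``constant'' contributions from the $-\log 2$ terms — multiplied by $\int_0^1 x^2\,dx=\tfrac13$ in the first piece and by $\int_0^1 x(1-x)\,dx=\tfrac16$ in each of the other two — add to $-\tfrac43\log 2$. The three series contributions evaluate to $\tfrac{4}{\pi^2}\eta(3)$, $-\tfrac{2\zeta(3)}{\pi^2}$ and $\tfrac{\zeta(3)}{\pi^2}$ respectively, where $\eta(3)=\sum_{k\ge 1}(-1)^{k+1}/k^3=\tfrac34\zeta(3)$; these collapse to $\tfrac{2\zeta(3)}{\pi^2}$, giving the stated value of $L(G)$.

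The main (but routine) obstacle is justifying the interchange of summation and integration near the logarithmic singularities of $\log\sin(\pi t/2)$ at $t=0$ and $t=2$, which in the original integrals correspond to $y=x$, to $x=0$, and to the corner $(x,y)=(1,1)$. This is handled by truncating to $[\varepsilon,2-\varepsilon]$, where the Fourier series converges uniformly, exchanging sum and integral there, and passing to $\varepsilon\to 0$ using the Lebesgue-integrability of $\log\sin(\pi t/2)$ against the measures $x^2\,dx$ and $x\,dy\,dx$ (on $\{0<x<y<1\}$) appearing in the three pieces; alternatively, one may insert an Abel factor $r^k$ to obtain the closed-form sum $-\log 2+\tfrac12\log(1-2r\cos(\pi t)+r^2)$ for $0<r<1$, compute everything in that smooth setting, and let $r\nearrow 1$.
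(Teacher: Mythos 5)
The paper offers no proof to compare against --- it introduces this lemma with ``We state without proof an interesting calculation'' --- so I have checked your computation directly, and it is correct. The identity $\sin^2A-\sin^2B=\sin(A+B)\sin(A-B)$ does split $L(G)$ into the three pieces you describe, and the individual evaluations all check out: the $-\log 2$ terms contribute $2(-\log 2)\cdot\tfrac13=-\tfrac23\log 2$ from the first piece and $2(-\log 2)\cdot\tfrac16=-\tfrac13\log 2$ from each of the other two, totalling $-\tfrac43\log 2$; and using $\int_0^1x^2\cos(k\pi x)\,dx=2(-1)^k/(k\pi)^2$ and $\int_0^1x\sin(k\pi x)\,dx=(-1)^{k+1}/(k\pi)$, the three series contributions come out to $\tfrac{4}{\pi^2}\sum_k(-1)^{k+1}k^{-3}=\tfrac{3}{\pi^2}\zeta(3)$, then $-\tfrac{2}{\pi^2}\zeta(3)$, then $+\tfrac{1}{\pi^2}\zeta(3)$ (the last via $\int_0^1x\bigl[(-1)^k\sin(k\pi x)-\sin(2k\pi x)\bigr]dx=-\tfrac{1}{2k\pi}$), summing to $\tfrac{2}{\pi^2}\zeta(3)$ exactly as claimed. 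Your handling of the interchange of sum and integral is adequate; it can even be streamlined: in the two double integrals the inner interchange is just term-by-term integration of a Fourier series over an interval, which is always legitimate for integrable functions, and the resulting outer series converges absolutely and uniformly by comparison with $\sum k^{-2}$; for the single integral, Parseval in $L^2(0,2)$ (both $\log\sin(\pi t/2)$ and $t^2$ lie in $L^2$) gives the conclusion immediately, though your truncation or Abel-factor arguments work equally well.
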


\noindent In particular with this $G$, $L(G)\not = -2/3$ so such a Bos array does not satisfy (2). 

Taking $G(x)=1-(x^2-1)^2$, the arrays satisfy (3): we obtain $\mu_B$ as the limiting measure. Elementary but nontrivial calculations yield

\begin{lemma} 
For $G(x)=1-(x^2-1)^2,$
\[L(G)=-\frac{26}{9}-4\log(2)+4\sqrt{2}\log(\sqrt{2}+1)\approx
-0.675675691\cdots.\]
\end{lemma}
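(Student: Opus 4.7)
The plan is to exploit two factorizations that split every logarithm appearing in $L(G)$ into elementary pieces. Since $G(x)=1-(x^2-1)^2=x^2(2-x^2)$, one has $\log G(x) = 2\log x + \log(2-x^2)$; moreover
\[G(y)-G(x) = (y^2-x^2)(2-x^2-y^2) = (y-x)(y+x)(2-x^2-y^2),\]
so $\log(G(y)-G(x))=\log(y-x)+\log(y+x)+\log(2-x^2-y^2)$. Substituting, $L(G)$ becomes a sum of two single integrals and three double integrals over the triangle $\{0\le x\le y\le 1\}$. I would group the computation by source of transcendence, tracking separately the rational, $\log 2$, and $\log(\sqrt{2}+1)$ contributions.

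The elementary pieces are $2\int_0^1 x^2\log x\,dx = -\frac{2}{9}$ together with the two double integrals built from $\log(y\pm x)$, which are purely rational. For the $\log(y-x)$ piece, the substitution $u=y-x$ in the inner integral followed by $x\mapsto 1-x$ reduces everything to standard integrals like $\int_0^1 u^2\log u\,du = -\frac{1}{9}$ and gives $-\frac{11}{18}$. For the $\log(y+x)$ piece, I would swap the order of integration and integrate by parts in the inner variable, obtaining $\frac{1}{18}$.

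The first $\sqrt{2}\log(\sqrt{2}+1)$ contribution comes from $\int_0^1 x^2\log(2-x^2)\,dx$. Integration by parts with $dv=x^2\,dx$, the polynomial division
\[\frac{x^4}{2-x^2} = -x^2-2+\frac{4}{2-x^2},\]
and the antiderivative $\int\frac{dx}{2-x^2} = \frac{1}{2\sqrt{2}}\log\frac{\sqrt{2}+x}{\sqrt{2}-x}$ produce the factor $\log\frac{\sqrt{2}+1}{\sqrt{2}-1}=2\log(\sqrt{2}+1)$, via the identity $\frac{\sqrt{2}+1}{\sqrt{2}-1}=(\sqrt{2}+1)^2$ from $(\sqrt{2}+1)(\sqrt{2}-1)=1$. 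This single integral then evaluates to $-\frac{14}{9}+\frac{4\sqrt{2}}{3}\log(\sqrt{2}+1)$.

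The main obstacle is the double integral $J:=2\int_0^1\int_x^1 x\log(2-x^2-y^2)\,dy\,dx$. I would swap the order of integration, substitute $u=x^2$ in the inner variable, and then $v=2-y^2-u$, reducing the inner integration to an antiderivative of $v\log v$. What remains is
\[J = \int_0^1 (2-y^2)\log(2-y^2)\,dy - \int_0^1 (2-2y^2)\log(2-2y^2)\,dy - \frac{1}{3}.\]
The first integral is handled by integration by parts against $(2-y^2)\,dy$, which reproduces the same polynomial division $x^4/(2-x^2)$ used above, yielding $-\frac{22}{9}+\frac{8\sqrt{2}}{3}\log(\sqrt{2}+1)$. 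The second splits via $\log(2-2y^2)=\log 2+\log(1-y^2)$, and the residual $\int_0^1(1-y^2)\log(1-y^2)\,dy$ is rational by writing $\log(1-y^2)=\log(1-y)+\log(1+y)$ and applying the substitutions $u=1-y$ and $u=1+y$. Collecting contributions, the rational parts telescope to $-\frac{26}{9}$, the $\log 2$ coefficients sum to $-4$, and the $\log(\sqrt{2}+1)$ coefficients sum to $\frac{4\sqrt{2}}{3}+\frac{8\sqrt{2}}{3}=4\sqrt{2}$, giving the stated formula.
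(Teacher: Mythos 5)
Your proposal is correct: the paper states this lemma without proof (``elementary but nontrivial calculations''), and I have checked your decomposition and all the intermediate values ($-\tfrac{2}{9}$, $-\tfrac{11}{18}$, $\tfrac{1}{18}$, $-\tfrac{14}{9}+\tfrac{4\sqrt{2}}{3}\log(\sqrt{2}+1)$, $-\tfrac{22}{9}+\tfrac{8\sqrt{2}}{3}\log(\sqrt{2}+1)$, and $\int_0^1(2-2y^2)\log(2-2y^2)\,dy=4\log 2-\tfrac{20}{9}$), which do assemble to $-\tfrac{26}{9}-4\log 2+4\sqrt{2}\log(\sqrt{2}+1)$. The only blemish is a slip of wording: after the substitutions in $J$ the inner integral is an antiderivative of $\log v$, not of $v\log v$ (the latter merely appears in that antiderivative $v\log v - v$), and your displayed formula for $J$ is consistent with the correct computation.
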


Thus again, with this $G$, $L(G)\not = -2/3$ so such a Bos array does not satisfy (2). Indeed, if one {\it could} find a Bos array with $L(G)=-2/3$, then, by Theorem \ref{asympfek}, the array satisfies (3) and hence, a posteriori, $G(x)=1-(x^2-1)^2$, a contradiction. Thus, unfortunately, Bos arrays on $B$ {\it never} satisfy (2), giving a negative answer to question 9 of \cite{BBCL}.

\section{Algorithms}
It is clear that one can expect to have lists of explicit good interpolation points
only for a very limited class of compact sets, even in the univariate case. If we have
to produce good points for a more or less arbitrary compact set, one must produce them 
algorithmically. We now discuss some recent work in this direction. 
\par
In a series of papers, Bos, Sommariva and Vianello (cf. \cite{SV} and
with De Marchi \cite{BDSV}) have introduced the notion of {\it approximate
Fekete points}. For $K\subset\CC^d$ compact, a basis $\{P_1,P_2,\cdots,P_N\}$ 
for ${\mathcal P}_n$, and a set of $M\geq N$ points $\{a_1,...,a_M\}$ of $K$, we consider the $N\times M$ matrix whose columns are of the form
$${\vec V}(a_j):=
\left[
\begin{array}{ccccc}
 P_1(a_j)\\
  P_2(a_j)\\ 
  \vdots  \\
  P_N(a_j)
\end{array}\right].$$
Selecting 
a subset of columns is then equivalent to selecting a subset of points. We choose the first point $x_1\in \{a_1,...,a_M\}$ to maximize $\|{\vec V}(a_j)\|_2$. Having chosen  $x_1,x_2,\cdots,x_k\in  \{a_1,...,a_M\}$ the $(k+1)$st point $x_{k+1}\in \{a_1,...,a_M\}$ is chosen
so that the volume generated by the columns ${\vec V}(x_{k+1})$ and
${\vec V}(x_1),{\vec V}(x_2),\cdots,{\vec V}(x_{k})$ is as large as possible. 

Suppose $K$ is $L-$regular. If for each $n=1,2,...$ one chooses a set of $M(n)\geq N$ points $A_{M(n)}=\{a_1^{(n)},...,a_{M(n)}^{(n)}\}$ of $K$ so that $\bigcup_n A_{M(n)}$ forms a {\it weakly admissible mesh for $K$} (WAM), then the corresponding array of approximate Fekete points satisfies (2) and hence (3) (Theorem 1 of \cite{BCLSV}). The mesh $\bigcup_n A_{M(n)}$ is weakly admissible, according to \cite{calvilev}, if $\# A_{M(n)}$ grows polynomially in $n$ and 
$$ ||p||_K\leq C_n ||p||_{A_{M(n)}} \ \hbox{for all} \ p\in \mathcal P_n$$
where $C_n$ grows polynomially in $n$. All $L-$regular compact sets $K$ admit a weakly  admissible mesh; cf., Theorem 16 of \cite{calvilev}. We remark that a WAM is called {\it admissible} (AM) if one can take $C_n=C$, a constant independent of $n$. 

There is also an algorithmic notion of {\it discrete Leja points}; as with approximate Fekete points, constructing discrete Leja points from a weakly admissible mesh $\bigcup_n A_{M(n)}$ gives an array satisfying (2) and hence (3). The interested reader is referred to \cite{BDSV} for details of the algorithm.

\section{Kergin interpolation} Of the many polynomial interpolation alternatives to Lagrange interpolation, one of the most productive ones utilized for interpolating holomorphic functions in $\CC^d, \ d>1$ is {\it Kergin interpolation}. \par In this section, we give a new presentation of Kergin interpolation which highlights its canonical character. 
We let $\mathcal O(\CC^d)$ denote the space of entire functions and $\mathcal{L}(\mathcal O(\CC^d), \mathcal{P}_n)$ the space of continuous linear maps from $\mathcal O(\CC^d)$ to $\mathcal{P}_n$. 

\begin{theorem}\label{th:TC} There exists a unique map $\mathcal{K}$,
\begin{equation} \mathcal{K}\,:\, \mathbf{A}=(a_0,\dots ,a_n)\in(\CC^d)^{n+1} \longrightarrow \mathcal{K}[\mathbf{A}]\in \mathcal{L}(\mathcal O(\CC^d), \mathcal{P}_n),\end{equation}
such that
\begin{enumerate}
	\item[($K1$)]  for every $f\in \mathcal O(\CC^d)$, $\mathcal{K}[\mathbf{A}](f)(a_j)=f(a_j), \ j=0,...,n$;
	\item[($K2$)] for every $f\in \mathcal O(\CC^d)$, the map $\mathbf{A}\rightarrow \mathcal{K}[\mathbf{A}](f)$ is continuous;
	\item[($K3$)] $\mathcal{K}$ is coordinate-free.
\end{enumerate}
The map $\mathcal K$ is defined by
\begin{equation}\label{eq:cfki}
\mathcal{K}[\mathbf{A}](f)(x)=\sum_{k=0}^{n} \int_{S_k} D^kf \left(\sum_{j=0}^k t_ia_i\right)(x-a_0,\dots, x-a_{k-1}) dm_k(t), 
 \end{equation}
where $D^kf$ is the $k$-th total derivative of $f$,  $S_k=\{ t=(t_0,\dots,t_k)\in [0,1]^{k+1}\; :\; \sum_{i=0}^k t_i=1\}$ and $dm_k$ is Lebesgue measure on $S_k$. 
\end{theorem}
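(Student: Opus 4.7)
The plan is first to verify that the explicit map $\mathcal K$ defined by formula (\ref{eq:cfki}) satisfies (K1)--(K3). Continuity (K2) is immediate: each integrand is continuous (indeed holomorphic) in $(\mathbf A,t,x)$ on the compact simplex $S_k$, so the integral, and hence the coefficients of $\mathcal{K}[\mathbf A](f)$ as a polynomial in $x$, depend continuously on $\mathbf A$. Coordinate-freeness (K3) is built into the formula: $D^kf$ is a symmetric $k$-multilinear form, and the affine combinations $\sum t_i a_i$ and difference vectors $x-a_j$ transform consistently under any affine automorphism of $\CC^d$. The interpolation identity (K1) is the substantive property; I would prove it by induction on $n$, using the standard Micchelli--Milman identity computing the directional derivative of an integral of $\phi$ over $S_{k-1}$ along $a_k-a_0$ as an integral over $S_k$, combined with the Leibniz rule applied to the factors $x-a_j$. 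Setting $x=a_j$ forces a difference vector to vanish and allows the sum to telescope to $f(a_j)$; the base case $n=0$ is trivial.

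\textbf{Uniqueness, reduction to ridge functions.} Let $\widetilde{\mathcal K}$ be any second map satisfying (K1)--(K3). For a linear form $\ell\colon\CC^d\to\CC$ and $g\in\mathcal O(\CC)$, set $f=g\circ\ell$. For any affine automorphism $T$ of $\CC^d$ with $\ell\circ T=\ell$, one has $f\circ T=f$, so (K3) gives
\[
\widetilde{\mathcal{K}}[T\mathbf{A}](f)(Tx)=\widetilde{\mathcal{K}}[\mathbf{A}](f\circ T)(x)=\widetilde{\mathcal{K}}[\mathbf{A}](f)(x).
\]
The group of $\ell$-preserving affine automorphisms acts on $(\CC^d)^{n+2}$, and a dimension count shows that on a Zariski-open set its orbits coincide with the level sets of the map $(\mathbf{A},x)\mapsto(\ell(a_0),\ldots,\ell(a_n),\ell(x))$. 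Hence on that open set, and by (K2) on all of $(\CC^d)^{n+1}\times \CC^d$, $\widetilde{\mathcal K}[\mathbf A](g\circ\ell)(x)=\widetilde L[\ell(\mathbf A)](g)(\ell(x))$ for some univariate operator $\widetilde L$ inheriting the $d=1$ versions of (K1)--(K3). In one variable, (K1) together with (K2) forces $\widetilde L$ to be classical Hermite interpolation (confluent nodes arising as limits of distinct ones), and the formula (\ref{eq:cfki}) specializes to that same object. Therefore $\widetilde{\mathcal K}$ and $\mathcal K$ agree on every ridge function.

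\textbf{Conclusion via density.} Exponentials $e_\xi(z):=e^{\langle\xi,z\rangle}$ are ridge functions $g\circ\ell$ with $g=\exp$ and $\ell=\langle\xi,\cdot\rangle$. Finite linear combinations of these are dense in $\mathcal O(\CC^d)$ for the compact-open topology, either by polarization combined with density of polynomials, or directly via the Fourier--Borel transform. Since $\widetilde{\mathcal K}[\mathbf A]$ and $\mathcal K[\mathbf A]$ both lie in $\mathcal L(\mathcal O(\CC^d),\mathcal P_n)$ (continuity in $f$ being part of the hypothesis) and agree on this dense subspace, they agree everywhere, proving uniqueness.

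The step I expect to be the main obstacle is the ridge-function reduction inside the uniqueness argument. Coordinate-freeness is a priori only a statement about affine bijections of $\CC^d$ and does not automatically collapse a $d$-variable interpolation problem to a $1$-variable one; extracting this collapse requires a careful orbit/transitivity analysis of the $\ell$-preserving affine group acting on $(\mathbf A,x)$, together with an appeal to (K2) to handle degenerate configurations where the action fails to be transitive on the generic stratum. Writing down a clean argument that avoids messy case analysis — for instance, by first restricting to a well-chosen Zariski-open set of tuples $\mathbf A$ and then extending by continuity — will be the delicate part.
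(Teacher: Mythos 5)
Your existence sketch and your overall uniqueness strategy (reduce by density to ridge functions $h(\langle\lambda,\cdot\rangle)$, then collapse to univariate interpolation at the nodes $\langle\lambda,a_i\rangle$) match the paper's proof. The problem is the mechanism you propose for the collapse. You claim that the group of $\ell$-preserving (affine) automorphisms acts on $(\mathbf{A},x)\in(\CC^d)^{n+2}$ with generic orbits equal to the level sets of $(\ell(a_0),\dots,\ell(a_n),\ell(x))$, "by a dimension count." The dimension count in fact goes the wrong way: the $\ell$-stabilizer in the affine group has dimension $d^2-1$, while a level set has dimension $(n+2)(d-1)$, so transitivity on level sets is impossible as soon as $n\geq d$. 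A concrete failure in $\CC^2$ with $\ell=z_1$: the tuples $\bigl((0,0),(0,1),(1,0),(1,0)\bigr)$ and $\bigl((0,0),(0,1),(1,0),(1,1)\bigr)$ have the same $\ell$-values but no $\ell$-preserving affine map carries one to the other. So the step you yourself flag as "the main obstacle" is not merely delicate; as formulated it is false, and the ridge-function reduction is left unproved. (A secondary issue: the paper's (K3) is stated only for invertible \emph{linear} maps, so invariance under translations is not available to you without further argument.)

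The paper's device, which you are missing, is a degeneration rather than a transitivity argument. For $f=h(\langle\lambda,\cdot\rangle)$ one takes the one-parameter family of invertible linear maps $m_\epsilon$ fixing $\lambda$ and acting as $\epsilon\,Id$ on the hyperplane $H=\{\langle\lambda,\cdot\rangle=0\}$. Since $m_\epsilon x-x\in H$, $f\circ m_\epsilon=f$, and (K3) gives $\Pi[\mathbf{A}](f)=\Pi[m_\epsilon\mathbf{A}](f)\circ m_\epsilon$ for every $\epsilon>0$. Letting $\epsilon\to 0$, $m_\epsilon$ converges to the rank-one projection $x\mapsto\langle\lambda,x\rangle\lambda/\|\lambda\|^2$, and (K2) lets one pass to the limit in the node variable, showing that $\Pi[\mathbf{A}](f)(x)$ is a polynomial of degree at most $n$ in the single variable $\langle\lambda,x\rangle$ depending only on the numbers $\langle\lambda,a_i\rangle$. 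One then specializes to the dense set of $\lambda$ for which these numbers are distinct, so that (K1) pins the polynomial down as the classical Lagrange interpolant of $h$ (no Hermite/confluent analysis is needed), and finally extends to all $\lambda$ using density and the continuity of $f\mapsto\Pi[\mathbf{A}](f)$. Replacing your orbit argument by this limiting argument repairs the proof and brings it in line with the paper's.
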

That $\mathcal{K}[\mathbf{A}]$ is coordinate-free means that for every invertible linear map $m$ on $\CC^d$ 
$$ \mathcal{K}[\mathbf{A}] (\cdot \circ  m ) =\mathcal{K}[m\mathbf{A}](\cdot) \circ m, $$
where $m\mathbf{A}:=(m(a_0),\dots , m(a_d))$. \par
Condition ($K1$) is relatively weak (e.g., if $a_i=a$ for $i=0,\dots,d$ there is only one condition). We shall see later that, together with ($K2$) and ($K3$), it implies much stronger properties.  The operator ${\mathcal K} [\mathbf{A}]$ is called the {\it Kergin interpolating operator} with respect to ${\mathbf A}$. In contrast with multivariate Lagrange interpolation, the number of points $n+1$ is independent of the dimension $d$ of $\CC^d$.

\begin{proof} We first prove that there exists at most one map $$\Pi\,:\, \mathbf{A}\in(\CC^d)^{n+1} \rightarrow \Pi[\mathbf{A}]\in \mathcal{L} (\mathcal O(\CC^d), \mathcal{P}_n)$$ satisfying ($K1$), ($K2$) and ($K3$). 
Suppose $\Pi_1$ and $\Pi_2$ are two such maps. We prove that for every $\mathbf{A}\in U:=\{\mathbf{A}\in (\CC^d)^{n+1} \,:\, \textrm{$a_l\not=a_j$ for $l\not=j$}\}$ and every $f\in \mathcal O(\CC^d)$, $\Pi_1[\mathbf{A}](f)=\Pi_2[\mathbf{A}](f)$. Since
 $U$ is dense in $(\CC^d)^{n+1}$ and $\mathbf{A}\rightarrow  \Pi_i[\mathbf{A}](f)$ is continuous, this suffices to prove our claim. \par We can reduce 
the problem as follows. Since $\Pi_i[\mathbf{A}]$ is continuous on $\mathcal O(\CC^d)$ and the space $V$ spanned  by ridge entire functions --  functions of the form $z\mapsto h(\langle \lambda, z\rangle)$, where $h\in \mathcal O(\CC)$ and $\lambda\in \CC^d$ -- is dense in $\mathcal O(\CC^d)$, it suffices to prove that  $\Pi_1[\mathbf{A}]=\Pi_2[\mathbf{A}]$ on $V$.  Further, since $\Pi_i[\mathbf{A}]$ is linear we simply need to prove $\Pi_1[\mathbf{A}](f)=\Pi_2[\mathbf{A}](f)$ for $f=h(\langle \lambda , \cdot\rangle)$, with $h\in \mathcal O(\CC)$ and $\lambda\in \CC^d$, $\lambda\not=0$. 
\par
Fixing such an $f$, let $H= \{\langle\lambda, \cdot\rangle=0\}$ be the hyperplane orthogonal to $\lambda$. For $\epsilon > 0$ we define a linear map $m_\epsilon$ by $m_\epsilon (\lambda)=\lambda$ and ${m_\epsilon}|_{H}=\epsilon Id$ where $Id$ denotes the identity on $H$. Clearly $m_\epsilon$ is invertible.
 Moreover, since $m_\epsilon x-x\in H $, we have 
$$(f\circ m_\epsilon) (x)= h(\langle \lambda , m_\epsilon x \rangle)=h(\langle \lambda ,  x \rangle)=f(x).$$
Since $\Pi_i$ is coordinate-free, we deduce that
$$\Pi_i[\mathbf{A}](f)= \Pi_i[\mathbf{A}](f\circ m_\epsilon)=\Pi_i[m_\epsilon\mathbf{A}](f)\circ m_\epsilon, \quad \epsilon >0.$$ 
We have $m_\epsilon (x)\rightarrow \langle \lambda, x \rangle \frac{\lambda}{\|\lambda\|^2}$ as $\epsilon \rightarrow 0$, thus by ($K2$),
\begin{equation}\label{eq:CT2}\Pi_i[\mathbf{A}](f)(x)= \Pi_i\left[\langle \lambda \mathbf{A}\rangle \cdot\frac{\lambda}{\|\lambda\|^2} \right](f)\left( \langle \lambda, x \rangle\cdot \frac{\lambda}{\|\lambda\|^2}\right)\end{equation}
where $\langle \lambda \mathbf{A}\rangle \cdot\frac{\lambda}{\|\lambda\|^2}=(\langle \lambda a_i\rangle \cdot\frac{\lambda}{\|\lambda\|^2}\, :\, i=0,\dots,n)$. Since $\Pi_i$ takes values in $\mathcal{P}_n$, (\ref{eq:CT2}) implies that there exists a univariate polynomial $p$ of degree at most $n$ depending on $f$, $A$ and $\lambda$ such that
$$\Pi_i[\mathbf{A}](f)(x)=p(\langle \lambda , x\rangle).$$
We specialize to the case where the $\langle \lambda , a_i\rangle$ are distinct. Since the $a_i$ themselves are distinct, the set $\tilde U$ of all such $\lambda$ is dense in $\CC^d$. It remains to use 
assumption ($K1$). We have
$$h(\langle \lambda , a_i\rangle)=f(a_i)=\Pi_i[\mathbf{A}](f)(a_i)=p(\langle \lambda , a_i\rangle), \quad i=0,\dots,n.$$
Hence $p$ is a polynomial of degree at most $n$ that interpolates $h$ at these $n+1$ points, i.e, $p_n$ is the LIP of $h$ at these points which we write as 
\begin{equation}\label{eq:ALI}\Pi_i[\mathbf{A}](f)=L[\langle \lambda , a_0\rangle , \dots \langle \lambda , a_n\rangle\, ;\, h](\langle \lambda , \cdot \rangle), \quad \lambda\in \tilde U.\end{equation}
In particular, $\Pi_1[\mathbf{A}](f)=\Pi_2[\mathbf{A}](f)$. We now use the density of $\tilde U$ and the continuity of $f\rightarrow \Pi_i[\mathbf{A}](f)$ to extend the identity to the case where $\lambda\not\in \tilde U$. This finishes the proof of the uniqueness. 

\medskip

Identity (\ref{eq:ALI}) shows that if a map $\Pi$ with the required properties 
exists then it should come as a natural multivariate generalization of one 
of the many available expressions of univariate Lagrange-Hermite interpolation. Formula \eqref{eq:cfki} is the natural multivariate version of the classical Hermite-Genocchi formula. The proof that this map satisfies the required properties is a simple calculation; cf., \cite{micchelli}.
\end{proof}

It is not difficult to show that the map ${\mathcal K} [\mathbf{A}]$ interpolates in the Hermite sense; i.e., if a point $a$ appears $k$ times in $\mathbf{A}$ then $D^j {\mathcal K} [\mathbf{A}] (f)(a)=D^jf(a)$, $j=0,\dots,k-1$. Kergin interpolating operators enjoy many interesting algebraic properties including the following.
\begin{enumerate} 
\item ${\mathcal K}[{\mathbf A}]$ is independent of the ordering of the points in ${\mathbf A}$, and
\item ${\mathcal K}[\mathbf{B}]\circ {\mathcal K}[{\mathbf A}]={\mathcal K}[{\mathbf B}]$ for every $\mathbf B\subset {\mathbf A}$.
\end{enumerate}
 
In Theorem \ref{th:TC}, Kergin operators are defined only for entire functions. Andersson and Passare \cite{AP, passare}, showed that Kergin operators ${\mathcal K}_D$ can actually be defined on $\mathcal O(D)$ where $D$ is a $\CC$-convex domain in $\CC^d$, i.e., the intersection of $D$ with any complex line is connected and simply connected. In $\RR^d$ this is simply ordinary convexity if we replace ``complex line'' by ``real line.'' 

There are many results on the approximation of holomorphic functions by Kergin polynomials. We offer a brief sample. \par 
Let $K\subset D$ be compact and set ${\mathcal K}_n:={\mathcal K}_D[{\mathbf A}_n]$ where,  for $n=1,2,3,\ldots$, ${\mathbf A}_n=[A_{n0},\ldots,A_{nn}]\subset K$. For $D$ with $C^2-$boundary, Bloom and Calvi \cite{BC1} gave conditions on the array $\{{\mathbf A}_n\}_{n=1,2,\ldots}$ so that ${\mathcal K}_n(f)$ converges to $f$ uniformly on $K$ as $n\to {\infty}$ for every function $f$ holomorphic in some neighborhood of $\bar D$. They utilized an integral representation formula for the remainder $f-{\mathcal K}_n(f)$ proved by Andersson and Passare \cite{AP}. 

More in line with the ideas in this work, we call an array $\{{\mathbf A}_n\}_{n=1,2,\ldots}$ {\it extremal} for a compact set $K$ if ${\mathcal K}_n(f)$ converges to $f$ uniformly on $K$ for each  $f$ holomorphic in a neighborhood of $K$. For $K\subset \RR^d$, Bloom and Calvi \cite{BC2} proved the following striking result.

\begin{theorem} Let $K\subset \RR^d, \ d\geq 2$, be a compact, convex set with nonempty interior. Then $K$ admits extremal arrays for Kergin interpolation if and only if $d=2$ and $K$ is the region bounded by an ellipse.
\end{theorem}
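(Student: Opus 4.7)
For the \emph{if} direction, the affine equivariance of Kergin interpolation (property ($K3$), extended to affine maps via the translation invariance of \eqref{eq:cfki}) reduces the problem to $K = B_2 = \{x\in\RR^2: |x|\leq 1\}$. I propose the explicit array $A_{n,i} = (\cos(2\pi i/(n+1)),\sin(2\pi i/(n+1)))$, $i=0,\ldots,n$, consisting of the $(n+1)$-st roots of unity on $\partial B_2$. To verify extremality I reduce to ridge functions $f(x) = h(\langle\lambda, x\rangle)$, which span a dense subspace of $\mathcal O(\text{nbhd}\, B_2)$; uniform bounds on $\mathcal K_n$ coming from the Andersson--Passare integral representation on a $\CC$-convex neighborhood of $B_2$ allow this extension. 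On a ridge, identity \eqref{eq:ALI} gives $\mathcal K_n(f)(x) = L_n(h)(\langle\lambda, x\rangle)$, where $L_n(h)$ is the univariate Hermite--Lagrange interpolant of $h$ at the projected multi-set $\{|\lambda|\cos(\tfrac{2\pi i}{n+1} - \phi_\lambda)\}_{i=0}^n \subset [-|\lambda|, |\lambda|]$. These projections equidistribute with multiplicity to the arcsine measure on $[-|\lambda|, |\lambda|]$, so Theorem \ref{walsh2} applied to the interval yields $L_n(h) \to h$ uniformly on $[-|\lambda|, |\lambda|]$, whence $\mathcal K_n(f) \to f$ uniformly on $B_2$.

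\textbf{Necessity, step 1: projection to intervals.} Conversely, suppose $\{\mathbf A_n\}\subset K$ is extremal. Fix $\lambda \in \RR^d \setminus \{0\}$; for any $h$ holomorphic in a neighborhood $U$ of $\pi_\lambda(K) = [\alpha_\lambda, \beta_\lambda]$, the function $f(x) = h(\langle\lambda, x\rangle)$ is holomorphic in the $\CC^d$-neighborhood $\{\langle\lambda, \cdot\rangle \in U\}$ of $K$. Identity \eqref{eq:ALI} together with extremality forces the univariate HLIP of $h$ at $\{\langle\lambda, A_{n,i}\rangle\}_{i=0}^n$ to converge to $h$ uniformly on $\pi_\lambda(K)$; thus the projected multi-set is univariate-extremal on the interval $\pi_\lambda(K)$. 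By Theorem \ref{walsh2}, the projected empirical measures converge weak-$*$ to the equilibrium (arcsine) measure $\omega_{\pi_\lambda(K)}$. Passing to a weak-$*$ limit $\mu$ of $\tfrac{1}{n+1}\sum_i \delta_{A_{n,i}}$ on $K$, one obtains $\pi_{\lambda\#}\mu = \omega_{\pi_\lambda(K)}$ for \emph{every} direction $\lambda \in \RR^d\setminus\{0\}$.

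\textbf{Necessity, step 2: moment constraints.} Writing $c(\lambda) = (\alpha_\lambda+\beta_\lambda)/2$ and $w(\lambda) = \beta_\lambda - \alpha_\lambda$, direct computation of arcsine moments gives
\[
\int t\, d\omega_{[\alpha_\lambda, \beta_\lambda]}(t) = c(\lambda), \qquad \int t^2\, d\omega_{[\alpha_\lambda, \beta_\lambda]}(t) = c(\lambda)^2 + \tfrac{1}{8}\, w(\lambda)^2.
\]
The first identity, which reads $\langle\lambda, \int x\, d\mu\rangle = c(\lambda)$, forces $c(\lambda)$ to be linear in $\lambda$; equivalently, $h_K(\lambda) - h_K(-\lambda)$ is linear, so $K$ is centrally symmetric about a point which we translate to the origin. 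Then $c(\lambda)\equiv 0$, $w(\lambda) = 2h_K(\lambda)$, and the second-moment identity becomes $\lambda^T M\lambda = \tfrac{1}{2} h_K(\lambda)^2$ with $M := \int xx^T\, d\mu$. Hence $h_K(\lambda) = \sqrt{2\lambda^T M\lambda}$, the support function of the ellipsoid $\{x: x^T(2M)^{-1} x \leq 1\}$, so $K$ is this ellipsoid. By a further affine reduction assume $K = B_d$; then $h_K(\lambda) = |\lambda|$ forces $M = \tfrac{1}{2} I$, whence
\[
\int_{B_d} |x|^2\, d\mu = \mathrm{tr}\, M = \tfrac{d}{2}.
\]
Since $|x|^2 \leq 1$ on $B_d$ and $\mu$ is a probability measure, $d/2 \leq 1$, forcing $d = 2$. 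Combined with the ellipsoid conclusion, $K$ is an ellipse and $d=2$, as claimed.

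\textbf{Expected difficulty.} The technical crux, in both directions, is the uniform-boundedness argument extending Kergin convergence from the dense subspace of ridge functions to all of $\mathcal O(\text{nbhd}\, K)$; this rests on quantitative control of the Andersson--Passare remainder kernel on a $\CC$-convex envelope of $K$. The geometric punchline---that arcsine marginals in every direction characterize ellipsoidal supports with $d\leq 2$---is conceptually the heart of the theorem but follows cleanly from the moment computation once $\mu$ is in hand.
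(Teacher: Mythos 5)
The paper does not actually prove this theorem; it is quoted from \cite{BC2}, and your strategy --- reduce to ridge functions via \eqref{eq:ALI}, force the projected arrays to be univariate-extremal on every segment $\pi_\lambda(K)$, and then show that a probability measure on $K$ with arcsine marginals in every direction exists only when $K$ is an ellipse in $\RR^2$ --- is essentially the Bloom--Calvi argument. Your necessity direction is in good shape: applying extremality only to the ridge functions $h(\langle\lambda,\cdot\rangle)$ needs no density argument, the first-moment identity correctly yields central symmetry, the second-moment identity $\lambda^TM\lambda=\tfrac12 h_K(\lambda)^2$ correctly identifies $K$ as an ellipsoid, and the trace bound $\mathrm{tr}\,M=d/2\le\sup_{B_d}|x|^2=1$ is a clean way to kill $d\ge3$. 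Two points there deserve more care: (i) \eqref{eq:ALI} is derived in the paper only for distinct projections, so you must invoke the Hermite--Genocchi formula \eqref{eq:cfki} to get the Hermite version for the projected multiset; and (ii) Theorem \ref{walsh2} gives the $|\omega_n|^{1/(n+1)}$ criterion, not weak-$*$ convergence of the counting measures --- passing from univariate extremality to the arcsine limit is the ``(4)$\implies$(3) for arrays on the boundary'' fact, which requires the unicity-of-potentials argument (the paper itself only asserts this with a pointer to \cite{BC2}).

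The genuine gap is in the sufficiency direction. Convergence of $\mathcal K_n(f)$ for ridge $f$ follows as you say, but to pass to an arbitrary $f$ holomorphic in a small convex neighborhood $\Omega$ of $B_2$ you need equicontinuity of $\{\mathcal K_n\}$ as maps $\mathcal O(\Omega)\to C(B_2)$, i.e.\ a bound $\|\mathcal K_n(f)\|_{B_2}\le C\|f\|_{K'}$ with $C$ independent of $n$ and $K'\subset\Omega$ compact; density of ridge functions alone gives nothing without it. The crude Cauchy estimate applied to \eqref{eq:cfki} yields $\|\mathcal K_n(f)\|_{B_2}\le\sum_{k\le n}(\mathrm{diam}\,B_2/r)^k\|f\|_{K_r}$, which is bounded only when $f$ extends to an $r$-neighborhood with $r>\mathrm{diam}\,B_2$ --- useless for small $\Omega$. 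The needed uniform bound is exactly what the Andersson--Passare remainder formula \cite{AP} provides, but only after one verifies, for the specific $\CC$-convex neighborhoods of the real disk in $\CC^2$, that the boundary kernel $\prod_j\langle\bar\partial\rho(\zeta),\zeta-a_{nj}\rangle^{-1}$ has the right $n$-th root asymptotics for your equally spaced boundary points; this is the content of \cite{BC1} and the hard half of \cite{BC2}, and you assert it rather than prove it. As written, the sufficiency direction is an outline with its central estimate missing.
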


\noindent Thus, for example, the real disk 
$$B=B_2=\{(z_1,z_2)\in \CC^2: \Im z_1  = \Im z_2=0, \ (\Re z_1)^2+ (\Re z_2)^2\leq 1\}$$
admits extremal arrays for Kergin interpolation.

\section{Open problems.} 
We conclude with some open questions, a subset of which comes from \cite{BBCL}.

\begin{enumerate}
\item Is the converse of Proposition \ref{onetwo} true?
\item Does $(2)\implies (4)$?
\item If an array lies in the Shilov boundary $S_K$ of $K$, does $(4)\implies (3)$? The Chebysev-radii Bos array in $B_2$ described in section 8 might give a counterexample. 
\item Construct an explicit array in the ball $B_2$ in section 8 satisfying (2), or, even better, (1).
\item Find an example of a compact set $K\subset \CC^d, \ d>1$, for which one can explicitly construct Fekete points.
\item Do multivariate Leja sequences satisfy (1)? (4)?
\item One can define multivariate {\it weighted} Leja sequences; starting with any point $x_1\in K$, having chosen $x_1,...,x_m\in K$ we choose $x_{m+1}\in K$ so that
$$|W(x_1,...,x_m,x_{m+1})|=\max_{x\in K} |W(x_1,...,x_m,x)|.$$
 Do these yield asymptotic weighted Fekete arrays?
 \item For $K \subset \CC^d$ compact and $L-$regular, does there exist $c=c(K)>1$ such that Fekete arrays of order $cn, \ n=1,2,É$, 
form an admissible mesh (AM) for $K$?
\item For $K \subset \CC^d$ compact, $L-$regular, and polynomially convex, if a {\it triangular} array satisfies $\{G_{\alpha}\}$ is $\theta- aT$ for $K$, is (4) satisfied? Is the converse true? Note if $d=1$ this equivalence is (essentially) Theorem \ref{walsh2}.
\item Let $K\subset \CC^d, \ d>1,$ be $L-$regular. If one takes {\it asymptotic} Fekete points, can the corresponding polynomials be used to recover the pluricomplex Green function $V_{K}$ as in Theorem \ref{thm42}?

\end{enumerate} 

\bigskip

\end{document}